\newtheorem{theorem}{Theorem}[section]
\newtheorem{corollary}[theorem]{Corollary}
\newtheorem{lemma}[theorem]{Lemma}
\newtheorem{proposition}[theorem]{Proposition}
\def\11{\textbf{$1$}}
\begin{document}

\title[Weak-2-local $^*$-derivations on $B(H)$]{Quasi-linear functionals determined by weak-2-local $^*$-derivations on $B(H)$}

\author[Niazi]{Mohsen Niazi}
\address{Department of Mathematics, Faculty of Mathematical and Statistical Sciences, University of Birjand, Birjand, Iran.}
\curraddr{Departamento de An{\'a}lisis Matem{\'a}tico, Facultad de
Ciencias, Universidad de Granada, 18071 Granada, Spain.}
\email{niazi@birjand.ac.ir}

\author[Peralta]{Antonio M. Peralta}
\address{Departamento de An{\'a}lisis Matem{\'a}tico, Facultad de
Ciencias, Universidad de Granada, 18071 Granada, Spain.}
\curraddr{Visiting Professor at Department of Mathematics, College of Science, King Saud University, P.O.Box 2455-5, Riyadh-11451, Kingdom of Saudi Arabia.}
\email{aperalta@ugr.es}

\thanks{Authors partially supported by the Spanish Ministry of Economy and Competitiveness project no. MTM2014-58984-P, and Junta de Andaluc\'{\i}a grant FQM375. Second author partially supported by the Deanship of Scientific Research at King Saud University (Saudi Arabia) research group no. RG-1435-020. The first author acknowledges the partial financial support from the IEMath-GR program for visits of young talented researchers.}

\subjclass[2000]{Primary 47B49, 15A60, 47B48  Secondary 15A86; 47L10.}


\begin{abstract} We prove that, for every separable complex Hilbert space $H$, every weak-2-local $^*$-derivation on $B(H)$ is a linear $^*$-derivation. We also establish that every {\rm(}non-necessarily linear nor continuous{\rm)} weak-2-local derivation on a finite dimensional C$^*$-algebra is a linear derivation.
\end{abstract}

\keywords{weak-2-local derivations, weak-2-local $^*$-derivations, finite dimensional C$^*$-algebras}

\maketitle
 \thispagestyle{empty}

\section{Introduction}

The Mackey-Gleason theorem and its subsequent generalizations constitute some of the most influencing results in axiomatic theory of quantum mechanics, and led the researchers to develop many interesting applications to mathematics (compare the monograph \cite{Dvurech}). A renewed mathematical interest in the Mackey-Gleason theorem  becomes more evident after recent applications of these results to determine when a 2-local $^*$-homomorphism or a 2-local derivation on a von Neumann algebra is a linear $^*$-homomorphism or a linear derivation, respectively (cf. \cite{BurFerGarPe2014RACSAM, BurFerGarPe2014JMAA, AyuKuday2014} or \cite{AyuKudPe2014}).\smallskip

In \cite{BenAliPeraltaRamirez}, A. Ben Ali Essaleh, M.I. Ram{\'i}rez and the second author of this note introduce a weak variant of Kadison's notion of local derivations. We recall that, according to Kadison's definition, a linear mapping $T$ from a Banach algebra $A$ into a $A$-bimodule $X$ is said to be a \emph{local derivation} if for every $a$ in $A$, there exists a derivation $D_{a}: A\to X$, depending on $a$, such that $T(a)= D_{a} (a)$ (see \cite{Kad90}). If for every $a$ in $A$, and every $\phi\in X^*$ there exists a derivation $D_{a,\phi}: A\to X$, depending on $a$ and $\phi$, such that $\phi T(a)= \phi D_{a,\phi} (a)$, we say that $T$ is a weak-local derivation. It is due to B.E. Johnson that every local derivation from a C$^*$-algebra $A$ into a Banach $A$-bimodule is a derivation (see \cite{John01}). For the wider notion of weak-local derivations, it is proved in \cite[Theorem 3.4]{BenAliPeraltaRamirez} that every weak-local derivation on a C$^*$-algebra is a derivation.\smallskip

In the setting of non-necessarily linear maps, P. \v{S}emrl defined the notion of 2-local derivations in \cite{Semrl97}. Let $A$ be a Banach algebra, a non-necessarily linear mapping $\Delta :A \to A$, is said to be a 2-local derivation if for every $a,b\in A$, there exists a (linear) derivation $D_{a,b}: A\to X$, depending on $a$ and $b$, such that $T(a)= D_{a,b} (a)$ and $T(b)= D_{a,b} (b)$. For an infinite-dimensional separable Hilbert space $H$, \v{S}emrl proves that every 2-local derivation $T : B(H) \to B(H)$ (no linearity or continuity of $T$ is assumed) is a derivation \cite[Theorem 2]{Semrl97}. The most general conclusion in this line, due to S. Ayupov and K. Kudaybergenov, establishes that every 2-local derivation on an arbitrary von Neumann algebra is a derivation (see \cite{AyuKuday2014} and \cite{AyuKudPe2014}).\smallskip

In an attempt to study a weak version of (non-necessarily linear) 2-local derivations on C$^*$-algebras, we recently introduced the following definition. Let $A$ be a C$^*$-algebra. Following \cite[Definition 1.1]{NiPe}, a {\rm(}non-necessarily linear{\rm)} mapping $\Delta: A\to A$ is said to be a weak-2-local derivation {\rm(}respectively, a weak-2-local $^*$-derivation{\rm)} on $A$ if for every $a,b\in A$ and $\phi\in A^*$ there exists a derivation {\rm(}respectively, a $^*$-derivation{\rm)} $D_{a,b,\phi}: A\to A$, depending on $a$, $b$ and $\phi$, such that $\phi \Delta (a) = \phi D_{a,b,\phi} (a)$ and $\phi \Delta (b) = \phi D_{a,b,\phi} (b)$.\smallskip

The above notion of weak-2-local derivations on C$^*$-algebras is probably one of the weakest possible notions and gives, a priori, a very general class of maps. The usual techniques employed in previous papers are useless to deal with weak-2-local derivations on C$^*$-algebras. As in the historical forerunners, we studied first weak-2-local derivations on matrix algebras and finite dimensional C$^*$-algebras, and we prove that every  weak-2-local $^*$-derivation on a finite dimensional C$^*$-algebra is a linear derivation (cf. \cite[Corollary 3.12]{NiPe}). The question whether every weak-2-local derivation on a matrix algebra or on a finite dimensional C$^*$-algebra was left as an open problem.\smallskip

In this note, we resume the study of weak-2-local derivations on matrix algebras. Section \ref{Sec: matrix algebras} is devoted to present a new algebraic approach to prove that every weak-2-local derivation on a matrix algebra is a derivation. This result plays an important role in Section \ref{Sec: weak-2-local derivations on B(H)}, where we develop the first result for weak-2-local derivations on infinite dimensional C$^*$-algebras. The main result of the paper shows that, for a separable complex Hilbert space $H$, every weak-2-local $^*$-derivation on the C$^*$-algebra, $B(H)$, of all bounded linear operators on $H$, is a linear derivation (cf. Theorem \ref{t weak-2-local *derivations on B(l2)}).\smallskip

Our strategy is based on a different approach to the Bunce-Wright-Mackey-Gleason theorem and related studies. The circle of ideas around the Mackey-Gleason theorem includes several studies on quasi-linear functionals on a C$^*$-algebra $A$ (compare \cite{Aarnes70,BuWri96}). For each self-adjoint element $x$ in $A$, the symbol $A_{x}$ will denote the C$^*$-subalgebra of $A$ generated by $x$. Let $X$ be a Banach space. Following the notion introduced by J.F. Aarnes in \cite{Aarnes70}, we shall say that a \emph{quasi-linear operator} from $A$ into $X$ is a function $\mu : A \to  X$ satisfying:\begin{enumerate}[$(a)$] \item $\mu|_{A_x}: A_x \to X$ is a linear mapping for each self adjoint element $x \in A$;
\item $\mu(a+i b) = \mu(a) + i \mu(b),$ when $a$ and $b$ are self adjoint elements in $A$.
\end{enumerate} If in addition
\begin{enumerate}[$(c)$] \item  $\sup \{\|\mu (a)\|: a \in  A, \|a\|\leq 1\}<\infty,$ then we say that $\mu$ is bounded.
\end{enumerate}

One of the results developed in this note shows that given a complex Hilbert space $H$, every weak-2-local $^*$-derivation $\Delta: K(H)\to K(H)$ is a quasi-linear operator on $K(H)$ (cf. Theorem \ref{t derivation on Ka} and Corollary \ref{c quasi-linear functional on K(H)}). This is a key result to prove that, for a {\rm(}non-necessarily linear nor continuous{\rm)} weak-2-local derivation $\Delta: B(H)\to B(H)$, where $H$ is a separable complex Hilbert space, the mapping $\mathcal{P} (B(H))\to B(H),$ $p\mapsto \Delta(p)$ is a completely additive measure on the lattice, $\mathcal{P} (B(H)),$ of all projections in $B(H)$ (cf. the proof of Theorem \ref{t weak-2-local derivations on B(l2)}). Under these conditions, the Bunce-Wright-Mackey-Gleason and the Dorofeev-Shertsnev theorems can be appropriately applied to establish our main result.\smallskip\smallskip

\noindent\textbf{Notation}\smallskip\smallskip

Throughout the paper, given a Banach space $X$, we consider $X$ as a closed subspace of $X^{**},$ via its natural isometric embedding. Given a closed subspace $Y$ of $X$ we shall identify the weak$^*$-closure, $\overline{Y}^{\sigma(X^{**},X^{*})}$, of $Y$ in $X^{**}$, with $Y^{**}$. Throughout this note, we usually write $M_n$ to denote the C$^*$-algebra $M_n(\mathbb{C})$ of all $n\times n$ square matrices with entries in $\mathbb{C}$.

\section{Weak-2-local derivations on finite dimensional C$^*$-algebras}\label{Sec: matrix algebras}

In this section we prove that every weak-2-local derivation on a finite dimensional C$^*$-algebra is a linear derivation, this solves a problem we left open in \cite{NiPe}. The proof is extremely technical and uses a fundamentally algebraic approach. We split the arguments in a series of technical lemmas to facilitate our proof.\smallskip

Suppose $p_1, \ldots, p_n$ are mutually orthogonal minimal projections in $M_n.$ Given $i,j$ in $\{1,\ldots, n\}$, we shall denote by $e_{ij}$ the unique minimal partial isometry in $M_n$ satisfying $e_{ij}^* e_{ij} = p_j$ and $e_{ij} e_{ij}^* = p_i.$ The symbol $\phi_{ij}$ will denote the unique norm-one functional in $M_n^*$ satisfying $\phi_{ij} (e_{ij})=1$. Throughout this section, we shall frequently apply the identity:
\begin{equation}\label{eq [z,eij]}
  [z, e_{ij}] =(z_{ii} - z_{jj}) e_{ij} + \sum_{k=1, k\ne i}^n z_{ki} e_{kj} - \sum_{k=1, k\neq j}^n z_{jk} e_{ik},
\end{equation}
which is valid for every matrix $z=(z_{ij})\in M_n,$ and every $(i,j)\in \{1,\ldots,n\}.$ In the case $p_j=e_{jj},$ the identity \eqref{eq [z,eij]} writes in the form:
\begin{equation}\label{eq [z,pk]}
  [z,p_j] = \sum_{k=1, k\neq j}^{n} z_{kj} e_{kj} - z_{jk} e_{jk}\,.
\end{equation}

\begin{lemma}\label{l D} Let $\Delta: M_n\to M_n$ be a weak-2-local derivation. Suppose $p_1, \ldots, p_n$ are mutually orthogonal minimal projections in $M_n$. Let $q=1 - p_n,$ and let $\mathfrak{D}$ be a subset of $\{1,\ldots,n-1\}\times\{1,\ldots,n-1\},$ which contains the diagonal {\rm(}i.e. $(j,j)\in \mathfrak{D}$, for every $j\in \{1,\ldots,n-1\}${\rm)}. If $$q\Delta\left(\sum_{(i,j)\in \mathfrak{D}} \lambda_{ij}e_{ij}\right)p_n=0,$$ for every $\lambda_{ij}\in\mathbb{C}$, then $$q\Delta\left(\sum_{(i,j)\in \mathfrak{D}\cup\{(i_0,j_0)\}} \lambda_{ij}e_{ij}\right)p_n=0,$$ for every $\lambda_{ij}\in\mathbb{C}$
 and for every $i_0,j_0\in \{1,\ldots, n-1\},$ with $(i_0,j_0)\notin \mathfrak{D}.$
\end{lemma}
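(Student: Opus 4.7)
The plan is to reduce the target $q\Delta(a)p_n = 0$ to the vanishing of individual matrix entries, and then to exploit that every derivation on $M_n$ is inner. Writing $a = a_0 + \mu\,e_{i_0 j_0}$ with $a_0 = \sum_{(i,j)\in\mathfrak{D}}\lambda_{ij}\,e_{ij}$ and $\mu = \lambda_{i_0 j_0}$, and identifying $p_i$ with $e_{ii}$ (we may do so without loss of generality), the conclusion amounts to showing $\phi_{k,n}(\Delta(a)) = 0$ for each $k \in \{1,\ldots,n-1\}$. Note that since $(i_0,i_0)\in\mathfrak{D}$ while $(i_0,j_0)\notin\mathfrak{D}$, necessarily $j_0 \neq i_0$.

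Fix $k \in \{1,\ldots,n-1\}$. Applying the weak-2-local hypothesis to the pair $(a,a_0)$ and the functional $\phi_{k,n}$ produces $z \in M_n$ with $\phi_{k,n}(\Delta(a)) = \phi_{k,n}([z,a])$ and $\phi_{k,n}(\Delta(a_0)) = \phi_{k,n}([z,a_0]) = 0$ (the latter by the standing hypothesis). Since every matrix unit $e_{ij}$ appearing in either $a$ or $a_0$ has its second index in $\{1,\ldots,n-1\}$, identity \eqref{eq [z,eij]} simplifies to $([z,e_{ij}])_{k,n} = -\delta_{k,i}\,z_{j,n}$, so subtracting the two identities yields
\begin{equation*}
\phi_{k,n}(\Delta(a)) \;=\; \phi_{k,n}([z,a]) - \phi_{k,n}([z,a_0]) \;=\; -\mu\,\delta_{k,i_0}\,z_{j_0,n}.
\end{equation*}
When $k \neq i_0$ the Kronecker factor vanishes and $\phi_{k,n}(\Delta(a)) = 0$ follows immediately.

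The delicate case is $k = i_0$ with $\mu \neq 0$, where we are reduced to forcing $z_{j_0,n} = 0$. The obstruction is structural: for \emph{any} $b = \sum_{(i,j)\in\mathfrak{D}}c_{ij}\,e_{ij}$ in the hypothesis form, the analogous computation only produces a constraint of the shape $\sum_{j:(i_0,j)\in\mathfrak{D}}c_{i_0,j}\,z'_{j,n} = 0$ on the inner generator $z'$ attached to the pair $(a,b)$, and this sum never involves $z'_{j_0,n}$ because $(i_0,j_0)\notin\mathfrak{D}$. Hence Case~2 cannot be settled using pairs whose second member belongs to the $\mathfrak{D}$-sum form alone; one must pair $a$ with a $b$ \emph{outside} that form. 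The natural candidate is $b = e_{i_0,j_0}$ itself, for which $\phi_{i_0,n}([z,e_{i_0,j_0}]) = -z_{j_0,n}$, so knowledge of the vanishing $\phi_{i_0,n}(\Delta(e_{i_0,j_0})) = 0$ would immediately force $z_{j_0,n} = 0$ via this new pair (together with a complementary step to absorb the residual $\sum_{j:(i_0,j)\in\mathfrak{D}}\lambda_{i_0,j}z_{j,n}$ that reappears in $\phi_{i_0,n}([z,a])$ when the second element of the pair is no longer $a_0$).

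The main obstacle is therefore producing this auxiliary vanishing of $\Delta$ on the matrix unit $e_{i_0,j_0}$ and related refinements. I expect it to be the content of preceding ``technical lemmas'' in the section, presumably obtained by chaining further weak-2-local applications on simpler input pairs, for instance by exploiting the factorization $e_{i_0,n} = e_{i_0,j_0}\,e_{j_0,n}$ or the commutator $[e_{i_0,j_0},e_{j_0,n}] = e_{i_0,n}$ together with the Leibniz rule satisfied by each inner derivation supplied by the weak-2-local property.
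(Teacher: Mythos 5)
Your first case ($k\neq i_0$) is correct and coincides with the paper's. The genuine gap is the case $k=i_0$: you do not prove it, you only locate the difficulty and then defer it to an auxiliary vanishing $\phi_{i_0n}\Delta(e_{i_0j_0})=0$ that you ``expect'' to come from preceding technical lemmas. No such lemma exists --- this is the first lemma of the section --- and that vanishing is not deducible from the stated hypothesis, since $(i_0,j_0)\notin\mathfrak{D}$; indeed it is precisely the special case of the conclusion obtained by setting all $\lambda_{ij}=0$ for $(i,j)\in\mathfrak{D}$ and $\lambda_{i_0j_0}=1$, so assuming it is circular. Even granting it, your own computation leaves the residual term $\sum_{j:(i_0,j)\in\mathfrak{D}}\lambda_{i_0j}z_{jn}$ uncontrolled, because the generator $z$ attached to the pair $(a,e_{i_0j_0})$ need not annihilate $\phi_{i_0n}[\,\cdot\,,a_0]$. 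So the proof is incomplete exactly at the one case that carries the content of the lemma.

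The paper closes this case by a device your obstruction analysis overlooks: instead of testing against the single functional $\phi_{i_0n}$, it uses $\phi=\phi_{i_0n}+\phi_{j_0n}$. A direct check with \eqref{eq [z,eij]} and \eqref{eq [z,pk]} gives $\phi[z,e_{i_0j_0}]=-z_{j_0n}=\phi[z,p_{j_0}]$ for \emph{every} $z\in M_n$, hence $\phi[z,a+\lambda e_{i_0j_0}]=\phi[z,a+\lambda p_{j_0}]$ identically in $z$; the weak-$2$-local property applied to the pair $\bigl(a+\lambda e_{i_0j_0},\,a+\lambda p_{j_0}\bigr)$ then forces $\phi\Delta(a+\lambda e_{i_0j_0})=\phi\Delta(a+\lambda p_{j_0})$. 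Since $\mathfrak{D}$ contains the diagonal, $a+\lambda p_{j_0}$ is again of the $\mathfrak{D}$-sum form (this is the only place that hypothesis is used), so the right-hand side is $0$ by assumption; subtracting the already established $\phi_{j_0n}\Delta(a+\lambda e_{i_0j_0})=0$ (valid because $j_0\neq i_0$) yields $\phi_{i_0n}\Delta(a+\lambda e_{i_0j_0})=0$. In particular your structural claim that the case cannot be settled by pairing with elements of the $\mathfrak{D}$-sum form is false: it can, provided one aggregates two coordinate functionals so that the commutator identity holds for all $z$ simultaneously. To repair your write-up, replace your Case~2 by this comparison with $a+\lambda p_{j_0}$ under the functional $\phi_{i_0n}+\phi_{j_0n}$.
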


\begin{proof} Let us fix $\displaystyle a=\sum_{(i,j)\in \mathfrak{D}} \lambda_{ij}e_{ij},$ where  $\lambda_{ij}\in\mathbb{C}$. We are assuming $q\Delta (a) p_n =0$. It is easy to see from \eqref{eq [z,eij]} that $\phi_{in}[z,e_{i_0j_0}]=0,$ for every $z\in M_n,$ $1\leq i\leq n-1,\ i\ne i_0.$ Thus, we have $$\phi_{in}[z,a + \lambda_{i_0j_0}e_{i_0j_0}] = \phi_{in}[z,a],$$ for every $z\in M_n,$ $1\leq i\leq n-1,\ i\ne i_0.$
Combining this identity with the weak-2-local behavior of $\Delta$ at $\phi_{in}$ and the points $a + \lambda_{i_0j_0}e_{i_0j_0}$ and $a,$ we prove, from the assumptions, that
  \begin{equation}\label{eq 0310 1}
    \phi_{in}\Delta(a + \lambda_{i_0j_0}e_{i_0j_0})=\phi_{in}\Delta(a)=0,\ \ (1\leq i\leq n-1,\ i\ne i_0).
  \end{equation}

Take the functional $\phi = \phi_{i_0n} + \phi_{j_0n}.$ Considering identity \eqref{eq [z,eij]}, for $[z,e_{i_0j_0}],$ and identity \eqref{eq [z,pk]}, for $[z,p_{j_0}],$ it is not hard to see that $\phi[z,e_{i_0j_0}]=\phi[z,p_{j_0}]$, for every $z\in M_n.$ Therefore, $$\phi[z,a + \lambda_{i_0j_0}e_{i_0j_0}] = \phi[z,a + \lambda_{i_0j_0}p_{j_0}].$$ This identity combined with the weak-2-local property of $\Delta$ at $\phi$ and the points $a + \lambda_{i_0j_0}e_{i_0j_0}$ and $a + \lambda_{i_0 j_0}p_{j_0},$ show that
$$\phi\Delta(a+\lambda_{i_0j_0}e_{i_0j_0})=\phi\Delta(a + \lambda_{i_0 j_0}p_{j_0}).$$ Since $\mathfrak{D}$ contains the diagonal of $\{1,\ldots,n-1\}\times\{1,\ldots,n-1\},$ the element $a + \lambda_{i_0 j_0}p_{j_0}$ writes in the form $\displaystyle a + \lambda_{i_0 j_0}p_{j_0} =\sum_{(i,j)\in \mathfrak{D}} \mu_{ij} e_{ij},$ for suitable $\mu_{i,j}\in \mathbb{C}.$ It follows from the hypothesis that $q \Delta(a + \lambda_{i_0 j_0}p_{j_0}) p_n=0$, and hence $0=\phi\Delta(a + \lambda_{i_0 j_0}p_{j_0})=\phi\Delta(a+\lambda_{i_0j_0}e_{i_0j_0}).$   Since $j_0\ne i_0,$ we deduce from \eqref{eq 0310 1} that $\phi_{j_0n}\Delta(a+\lambda_{i_0j_0}e_{i_0j_0})=0,$ and hence $$\phi_{i_0n}\Delta(a+\lambda_{i_0j_0}e_{i_0j_0})=0,$$ which combined with \eqref{eq 0310 1}, completes the proof.
\end{proof}

Let $A$ and $B$ be C$^*$-algebras. Given a mapping $\Delta: A\to B$, we define a new mapping $\Delta^{\sharp}: A\to B$ determined by the expression $\Delta^{\sharp} (a) = \Delta(a^*)^*$ ($a\in A$). The mapping $\Delta$ is called symmetric if $\Delta^{\sharp} =\Delta.$ It is clear that $\Delta$ is linear if and only if $\Delta^{\sharp}$ is.

\begin{lemma}\label{l q Delta(qaq)pn} Let $\Delta: M_n\to M_n$ be a symmetric {\rm(}i.e. $\Delta^\sharp=\Delta${\rm)} weak-2-local derivation. Suppose $p_1, \ldots, p_n$ are mutually orthogonal minimal projections in $M_n$, and let $q=1-p_n$. If $\Delta (p_j) = 0$, for every $j=1,\ldots,n,$ then
  $$q\Delta(qaq)p_n=0=p_n\Delta(qaq)q,$$ for every $a\in M_n.$
\end{lemma}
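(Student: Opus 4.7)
The strategy is first to reduce both identities to the single statement $q\Delta(qaq)p_n=0$ by exploiting the symmetry hypothesis $\Delta^{\sharp}=\Delta$. Indeed, if the first identity holds for every $a\in M_n$, then applying it to $a^*$ and taking adjoints yields $p_n\Delta(qa^*q)^*q=0$; since $\Delta$ is symmetric, $\Delta(qa^*q)^*=\Delta((qa^*q)^*)=\Delta(qaq)$, which delivers the second identity.

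To establish $q\Delta(qaq)p_n=0$, I would apply Lemma \ref{l D} inductively. Starting from $\mathfrak{D}_0=\{(j,j):1\leq j\leq n-1\}$, which by construction contains the diagonal, I would enlarge $\mathfrak{D}$ by one off-diagonal pair at a time until $\mathfrak{D}=\{1,\ldots,n-1\}\times\{1,\ldots,n-1\}$. Since every element of $qM_nq$ can be written in the form $\sum_{(i,j)\in\{1,\ldots,n-1\}^2}\mu_{ij}e_{ij}$, this will yield $q\Delta(qaq)p_n=0$ for every $a\in M_n$.

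The step that requires actual work is the base case: showing $q\Delta(e)p_n=0$ for every diagonal element $e=\sum_{j=1}^{n-1}\lambda_{jj}e_{jj}$. The plan is to fix $i\in\{1,\ldots,n-1\}$ and invoke the weak-2-local property of $\Delta$ at the functional $\phi_{in}$ and the pair $(e,p_i)$; recalling that every derivation on $M_n$ is inner, this produces $z\in M_n$ such that $\phi_{in}(\Delta(e))=\phi_{in}([z,e])$ and $\phi_{in}(\Delta(p_i))=\phi_{in}([z,p_i])$. A short computation based on \eqref{eq [z,pk]} gives $\phi_{in}([z,p_i])=-z_{in}$ and $\phi_{in}([z,e])=-\lambda_{ii}z_{in}$. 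Since $\Delta(p_i)=0$ by hypothesis, the first equality forces $z_{in}=0$, and the second then yields $\phi_{in}(\Delta(e))=0$. Letting $i$ range over $\{1,\ldots,n-1\}$, one concludes $q\Delta(e)p_n=\sum_{i=1}^{n-1}\phi_{in}(\Delta(e))\,e_{in}=0$.

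The main conceptual point, and the only place where the hypothesis $\Delta(p_j)=0$ enters directly, is the maneuver in the base case: the vanishing $\Delta(p_i)=0$ is brought into play only through the constraint it puts on the inner derivation supplied by the weak-2-local property when tested on the pair $(e,p_i)$, rather than at $e$ alone. Once this input is secured, the iterative application of Lemma \ref{l D} is routine, and the passage to the second identity via the symmetry $\Delta^{\sharp}=\Delta$ is immediate.
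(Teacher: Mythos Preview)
Your proof is correct and follows essentially the same route as the paper: reduce to the first identity via the symmetry $\Delta^{\sharp}=\Delta$, verify the base case on diagonal elements, and then enlarge $\mathfrak{D}$ one entry at a time using Lemma~\ref{l D}. The only difference is in how the base case is handled: the paper invokes \cite[Proposition~3.4]{NiPe}, which directly gives $\Delta\big(\sum_{j}\lambda_{jj}p_j\big)=\sum_{j}\lambda_{jj}\Delta(p_j)=0$, whereas you supply a self-contained argument pairing $e$ with $p_i$ at the functional $\phi_{in}$ to force $z_{in}=0$ and hence $\phi_{in}(\Delta(e))=0$. Both are valid; yours is slightly more elementary in that it proves only the $(i,n)$ entries vanish rather than the full additivity on orthogonal projections, but that is exactly what is needed here.
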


\begin{proof} Let $\mathfrak{D}_0$ be the diagonal subset of $\{1,\ldots,n-1\}\times\{1,\ldots,n-1\}.$ We observe that $\{e_{ij} : (i,j)\in \mathfrak{D}_0 \}$ is a set of mutually orthogonal projections in $M_n$. Thus, Proposition 3.4 in \cite{NiPe} and the hypothesis imply that $$q\Delta\left(\sum_{(i,j)\in \mathfrak{D}_0} \lambda_{ij}e_{ij}\right)p_n= q\left(\sum_{(i,j)\in \mathfrak{D}_0} \lambda_{ij} \Delta (e_{ij}) \right)p_n=0,$$ for every $\lambda_{ij}\in\mathbb{C}$. Now, applying Lemma \ref{l D} a finite number of times we deduce that $q\Delta(qaq)p_n=0,$ for every $a\in M_n.$\smallskip

For the second identity we observe that, since $\Delta$ is symmetric, we have $$p_n\Delta(qaq)q = \left(q \Delta(qaq)^* p_n \right)^* = \left(q \Delta(qa^*q) p_n \right)^* = 0,$$ for every $a\in M_n.$
\end{proof}

Lemma 2.5 in \cite{NiPe} proves that for every weak-2-local derivation $\Delta$ on a unital C$^*$-algebra $A$, we have $\Delta (1-a)= -\Delta (a)$, for every $a\in A$. Weak-2-local derivations are 1-homogeneous so, given a projection $p\in A$ and $\lambda\in \mathbb{C}\backslash\{0\}$, we have $$\Delta( a + \lambda p) = \lambda \Delta( \lambda^{-1} a + p) =\lambda \Delta( \lambda^{-1} a + p -1) = \Delta(a-\lambda (1-p)).$$ Since the above equality is obviously true for $\lambda=0$, we have:

\begin{lemma}\label{l Delta(a+lambda1)} Let $\Delta: A\to A$ be a weak-2-local derivation on a unital C$^*$-algebra. Then for each projection $p\in A$, $a\in A,$ and $\lambda\in \mathbb{C},$ we have $$\Delta(a+\lambda p)=\Delta(a-\lambda (1-p)).$$ $\hfill\Box$
\end{lemma}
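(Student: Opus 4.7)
The plan is to derive the asserted equality from just two ingredients already at hand: the $1$-homogeneity of any weak-2-local derivation, and the identity $\Delta(1-a)=-\Delta(a)$ supplied by Lemma~2.5 of \cite{NiPe}. No further structural information about $A$ or $\Delta$ enters the argument.

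First I would dispose of the trivial case $\lambda=0$, where both sides collapse to $\Delta(a)$, and henceforth assume $\lambda\neq 0$ so that $\lambda^{-1}$ is available. Two applications of $1$-homogeneity reduce the problem to the translation identity $\Delta(x+1)=\Delta(x)$ for every $x\in A$, applied to $x=\lambda^{-1}a+p-1$; indeed $\Delta(a+\lambda p)=\lambda\,\Delta(\lambda^{-1}a+p)$ on one side, and $\Delta(a-\lambda(1-p))=\lambda\,\Delta(\lambda^{-1}a+p-1)$ on the other, so the lemma follows as soon as the two inner arguments are seen to be interchangeable.

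To obtain the translation identity $\Delta(x+1)=\Delta(x)$, I would apply Lemma~2.5 of \cite{NiPe} with $-x$ in place of $a$ to get $\Delta(1+x)=\Delta(1-(-x))=-\Delta(-x)$, and then invoke $1$-homogeneity with scalar $-1$ to rewrite $-\Delta(-x)=\Delta(x)$. I do not expect any genuine obstacle: the entire proof is a short algebraic manipulation built from the two quoted facts, and the only point requiring care is the separation of the case $\lambda=0$ at the outset so that inversion of $\lambda$ is legitimate in the remainder.
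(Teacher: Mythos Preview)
Your argument is correct and matches the paper's own proof essentially line for line: the paper also treats $\lambda=0$ separately, then for $\lambda\neq 0$ writes $\Delta(a+\lambda p)=\lambda\,\Delta(\lambda^{-1}a+p)=\lambda\,\Delta(\lambda^{-1}a+p-1)=\Delta(a-\lambda(1-p))$, invoking $1$-homogeneity and the identity $\Delta(1-a)=-\Delta(a)$ from \cite[Lemma~2.5]{NiPe}. The only difference is that you spell out explicitly how the translation identity $\Delta(x+1)=\Delta(x)$ follows from those two facts, which the paper leaves implicit.
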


\begin{lemma}\label{l Delta(a+ekn)} Let $\Delta: M_n\to M_n$ be a weak-2-local derivation, let $p_1, \ldots, p_n$ be mutually orthogonal minimal projections in $M_n$, and let $q=1-p_n.$ Suppose $R$ is a subset of $\{1,\ldots,n-1\}.$ We set $\displaystyle r=\sum_{i\in R}p_i$ when $R\neq \emptyset$, and $r=0$ when  $R=\emptyset$. Let us assume that $\Delta (qaq + rap_n) = 0,$ for every $a\in M_n.$ Then $$\Delta (qaq + rap_n + \lambda e_{kn}) = p_k\Delta (qaq + rap_n + \lambda e_{kn})p_n,$$ for every $a\in M_n,$ $\lambda\in\mathbb{C},$ and $1\leq k\leq n-1$.
\end{lemma}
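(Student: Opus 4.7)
The plan is to reduce the conclusion to the scalar statements $\phi_{ij}\Delta(b+\lambda e_{kn})=0$ for every $(i,j)\neq(k,n)$, where $b:=qaq+rap_n$. First I would dispose of the trivial case $k\in R$: then $b+\lambda e_{kn}$ is itself of the form $qa'q+ra'p_n$ (take $a'$ equal to $a$ except $a'_{kn}=a_{kn}+\lambda$), so by hypothesis $\Delta(b+\lambda e_{kn})=0$ and the conclusion is immediate. Henceforth assume $k\notin R$. A direct reading of identity \eqref{eq [z,eij]} shows that every nonzero entry of $[z,e_{kn}]$ sits in row $k$ or column $n$; equivalently, $\phi_{ij}[z,e_{kn}]=0$ whenever $i\neq k$ and $j\neq n$.

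For each \emph{easy} index $(i,j)\neq(k,n)$ with $i\neq k$ and $j\neq n$, I would apply the weak-2-local property at the pair $\bigl(b+\lambda e_{kn},\,b\bigr)$ with the functional $\phi_{ij}$. The resulting inner derivation $D=[z,\cdot]$ satisfies $\phi_{ij}[z,b]=\phi_{ij}\Delta(b)=0$ by hypothesis, and hence
\[
\phi_{ij}\Delta(b+\lambda e_{kn})=\phi_{ij}[z,b]+\lambda\phi_{ij}[z,e_{kn}]=0+0=0.
\]

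The remaining \emph{hard} indices are $(k,j)$ with $j\neq n$ and $(i,n)$ with $i\neq k$. Pairing $x:=b+\lambda e_{kn}$ with $b$ alone extracts only $\phi_{kj}\Delta(x)=-\lambda z_{nj}$ or $\phi_{in}\Delta(x)=\lambda z_{ik}$, and the critical entries of $z$ are completely unconstrained by $\phi_{ij}[z,b]=0$. To force these to zero, I would combine several weak-2-local instances at different second points---the allowed-form shifts $b+\mu p_{j'}$ or $b+\mu e_{i'j'}$ with $i',j'\leq n-1$, as well as $b+\mu p_n$, which by Lemma \ref{l Delta(a+lambda1)} satisfies $\Delta(b+\mu p_n)=\Delta(b-\mu q)=0$---to obtain enough independent relations on the corresponding $z$'s. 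A particularly natural probe is the projection $\tfrac12(p_k+p_n+e_{kn}+e_{nk})$: inserted into Lemma \ref{l Delta(a+lambda1)} it couples $\lambda e_{kn}$ to scalar projections, and should eventually relate $\Delta(x)$ to $\Delta$-values on allowed-form elements. The main obstacle will be exactly this hard step: since $k\notin R$, no element $c$ of allowed form has a nonzero $(k,n)$-entry, so no commutator $[z,c]$ can involve the troublesome entries $z_{nj}$ or $z_{ik}$; one must therefore stitch together several weak-2-local pairings into a coherent algebraic system on the various inner derivations that nonetheless forces $\phi_{ij}\Delta(x)=0$.
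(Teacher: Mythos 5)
Your handling of the case $k\in R$ and of the \emph{easy} indices is correct and coincides with the first step of the paper's proof: for every $\phi$ with $\phi=(1-p_k)\phi(1-p_n)$ one has $(1-p_k)[z,e_{kn}](1-p_n)=0$, so pairing $b+\lambda e_{kn}$ with $b$ gives $(1-p_k)\Delta(b+\lambda e_{kn})(1-p_n)=0$. But that is all the proposal actually establishes. The \emph{hard} indices are the entire content of the lemma, and you have explicitly left them unresolved; the probes you suggest (generic shifts $b+\mu e_{i'j'}$, the projection $\tfrac12(p_k+p_n+e_{kn}+e_{nk})$) do not visibly close the system, so as it stands there is a genuine gap.

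The missing idea is not to accumulate relations on the unconstrained entries $z_{nj}$, $z_{ik}$ of the various inner derivations, but to choose functionals along which those entries cancel identically. Comparing \eqref{eq [z,eij]} with \eqref{eq [z,pk]}, one checks that $\phi[z,e_{kn}]=\phi[z,p_n]$ for \emph{every} $z$ when $\phi=\phi_{kj}+\phi_{nj}$ ($1\le j\le n-1$), and $\phi[z,e_{kn}]=\phi[z,p_k]$ for every $z$ when $\phi=\phi_{ik}+\phi_{in}$ ($i\ne k$). Hence the weak-2-local property applied at such a $\phi$ to the pair $(b+\lambda e_{kn},\,b+\lambda p_n)$, respectively $(b+\lambda e_{kn},\,b+\lambda p_k)$, yields $\phi\Delta(b+\lambda e_{kn})=\phi\Delta(b+\lambda p_n)$, respectively $\phi\Delta(b+\lambda e_{kn})=\phi\Delta(b+\lambda p_k)$, with no reference to the individual $z$'s. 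Both right-hand sides vanish: $b+\lambda p_k=q(a+\lambda p_k)q+r(a+\lambda p_k)p_n$ is of the allowed form, and $\Delta(b+\lambda p_n)=\Delta(b-\lambda q)=\Delta(q(a-\lambda)q+r(a-\lambda)p_n)=0$ by Lemma \ref{l Delta(a+lambda1)} and the hypothesis (you had this piece). Your easy-index result already gives $\phi_{nj}\Delta(b+\lambda e_{kn})=0$ and $\phi_{ik}\Delta(b+\lambda e_{kn})=0$, so subtracting isolates $\phi_{kj}\Delta(b+\lambda e_{kn})=0$ for $j\ne n$ and $\phi_{in}\Delta(b+\lambda e_{kn})=0$ for $i\ne k$, which is exactly the part your proposal leaves open.
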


\begin{proof} Let us pick $a\in M_n,$ and $\lambda\in\mathbb{C}.$ Fix an arbitrary $\phi\in M_n^*$ satisfying $\phi = (1-p_k)\phi (1-p_n)$.  Since, for each $z$ in $M_n$, we have $$(1-p_k)[z,e_{kn}](1-p_n)=0,$$ the weak-2-local property of $\Delta$ at $\phi$, $qaq + rap_n + \lambda e_{kn}$ and $qaq + rap_n,$ shows that $$\phi \Delta(qaq + rap_n + \lambda e_{kn}) = \phi \Delta(qaq + rap_n)  =0,$$ for every $\phi$ in the above conditions. Therefore,
\begin{equation}\label{eq 0309 2}
(1-p_k)\Delta(qaq + rap_n + \lambda e_{kn})(1-p_n) = 0.
\end{equation}

Take the functional $\phi=\phi_{kj}+\phi_{nj}$ $(1\leq j\leq n-1).$ Since $\phi[z,e_{kn}]=\phi[z,p_n]$ (compare \eqref{eq [z,eij]} and \eqref{eq [z,pk]}), by adding appropriate elements in both sides of this equality and using the weak-2-local behavior of $\Delta$ at the points $qaq + rap_n + \lambda e_{kn}$ and $qaq + rap_n + \lambda p_n$, we obtain
$$\phi\Delta(qaq + rap_n + \lambda e_{kn})=\phi\Delta(qaq + rap_n + \lambda p_n )=\hbox{(by Lemma \ref{l Delta(a+lambda1)})}$$ $$=\phi\Delta(qaq + rap_n - \lambda q)=\phi\Delta(q(a-\lambda)q+r(a-\lambda)p_n)=\hbox{(by hypothesis)} =0$$ Now, identity \eqref{eq 0309 2} implies that $\phi_{nj} \Delta(qaq + rap_n + \lambda e_{kn})=0$, and consequently $\phi_{kj} \Delta(qaq + rap_n + \lambda e_{kn})=0,$ for every $1\leq j\leq n-1.$ Therefore,
\begin{equation}\label{eq 0309 4}
  p_k\Delta(qaq + rap_n + \lambda e_{kn})(1-p_n) = 0.
\end{equation}

Finally, we consider the functional $\phi=\phi_{ik}+\phi_{in}$ $(1\leq i\leq n,i\ne k).$ Since $\phi[z,e_{kn}]=\phi[z,p_k],$ for every $z\in M_n$, the weak-2-local behavior of $\Delta$, at the points $qaq + rap_n + \lambda e_{kn}$ and $qaq + rap_n + \lambda p_k,$ proves that $$\phi\Delta(qaq + rap_n + \lambda e_{kn}) = \phi\Delta(qaq + rap_n + \lambda p_{k})$$ $$= \phi\Delta(q(a+\lambda p_k)q+r(a+\lambda p_k)p_n)
=\hbox{(by hypothesis)}= 0.$$ Applying \eqref{eq 0309 2} we deduce that $\phi_{ik}\Delta(qaq + rap_n + \lambda e_{kn}) = 0$, for every $i\neq k$, and hence $\phi_{in}\Delta(qaq + rap_n + \lambda e_{kn}) = 0,$ equivalently, $$p_i\Delta(qaq + rap_n + \lambda e_{kn}) p_n= 0,$$ for every $1\leq i\leq n, i\ne k.$ The desired statement follows from this final equality combined with \eqref{eq 0309 2} and \eqref{eq 0309 4}. \end{proof}

\begin{lemma}\label{l phi[z,.]} Let $p_1, \ldots, p_n$ be mutually orthogonal minimal projections in $M_n$, and let $q=1-p_n.$ Let $R$ be a subset of $\{1,\ldots,n\}.$ We keep the notation of Lemma \ref{l Delta(a+ekn)}. Then the following statements hold:
\begin{enumerate}[$(i)$]
    \item Let $\phi = \phi_{ln} + \phi_{kn},$ where $1\leq l,k\leq n, l\ne k,$ then $$\phi[z,p_kaq]=\phi[z,e_{lk}aq],$$ for every $z$ and $a$ in $M_n$.
    \item Let $\phi = \phi_{k1} + \phi_{kn},$ where $k\notin R,$ then $$\phi[z,rap_n]=\phi[z,rae_{n1}],$$ for every $z$ and $a$ in $M_n.$
  \end{enumerate}
\end{lemma}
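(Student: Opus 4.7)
The plan is to prove both identities by direct matrix entry computation, exploiting that $\phi_{ij}(w) = w_{ij}$, so each of the functionals $\phi$ in the statement simply extracts the sum of two specific entries of its argument.

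For part $(i)$, identify $p_k = e_{kk}$ and $q = \sum_{j\ne n} e_{jj}$. Then $b := p_k a q$ is supported only in row $k$, with $b_{km} = a_{km}$ for $m\ne n$ and $b_{kn}=0$; analogously, $c := e_{lk} a q$ is supported only in row $l$, with $c_{lm}=a_{km}$ for $m\ne n$ and $c_{ln}=0$. Since both $b$ and $c$ have zero $n$-th column, the $zb$- and $zc$-portions of $[z,\cdot] = z\cdot - \cdot z$ contribute nothing at positions $(l,n)$ and $(k,n)$. Computing the remaining $bz$- and $cz$-terms, and using $l\ne k$, one obtains
\[
[z,b]_{ln} + [z,b]_{kn} \;=\; -\sum_{m\ne n} a_{km}\, z_{mn} \;=\; [z,c]_{ln} + [z,c]_{kn},
\]
which is precisely the desired equality $\phi[z,p_k a q] = \phi[z, e_{lk} a q]$.

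For part $(ii)$, write $r = \sum_{i\in R} e_{ii}$. The matrix $rap_n$ is supported in column $n$ alone, with $(rap_n)_{in} = a_{in}$ for $i\in R$, while $rae_{n1}$ is supported in column $1$ alone, with $(rae_{n1})_{i1} = a_{in}$ for $i\in R$; all other entries vanish. The hypothesis $k\notin R$ kills row $k$ of both matrices, so the $bz$-part of each commutator vanishes at every entry in row $k$. Reading off the $zb$-contributions at $(k,1)$ and $(k,n)$, only one of the two is nonzero for each matrix, and both evaluate to $\sum_{m\in R} z_{km}\, a_{mn}$, yielding $\phi[z, rap_n] = \phi[z, rae_{n1}]$.

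The only real obstacle is bookkeeping: identifying which of the four relevant entries in each commutator vanish (using $l\ne k$ in $(i)$, and $k\notin R$ in $(ii)$), and confirming that the surviving ones coincide after the swap $p_k a q \leftrightarrow e_{lk} a q$ or $rap_n \leftrightarrow rae_{n1}$. A slightly more conceptual route, if preferred, exploits the factorizations $e_{lk} a q = e_{lk}(p_k a q)$ and $rae_{n1} = (rap_n) e_{n1}$, combined with the Leibniz rule $[z, xy] = [z,x] y + x [z,y]$ and identity \eqref{eq [z,eij]}; this reduces each statement to the vanishing of a single cross-term, which is enforced by exactly the conditions $l\ne k$ and $k\notin R$. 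No machinery beyond \eqref{eq [z,eij]} and \eqref{eq [z,pk]} is needed.
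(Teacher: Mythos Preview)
Your proof is correct and follows essentially the same route as the paper's: a direct entry-by-entry computation showing which of the four relevant terms in each commutator vanish and that the surviving ones match. The paper lists the individual values $\phi_{ln}(zp_kaq)$, $\phi_{kn}(p_kaqz)$, etc., one by one, whereas you organize the same computation by first noting which row/column of $b$ and $c$ is supported and then reading off the entries; the content is identical. Your closing remark about the Leibniz-rule shortcut via $e_{lk}aq = e_{lk}(p_kaq)$ and $rae_{n1} = (rap_n)e_{n1}$ is a nice observation not made in the paper, but the main argument is the same direct verification.
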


\begin{proof} $(i)$ It is not hard to check that $$\phi_{ln}(zp_kaq)= \phi_{ln}(p_kaqz)= \phi_{kn}(ze_{lk}aq)= \phi_{kn}(e_{lk}aqz)= \phi_{kn}(zp_kaq)=0,$$ $$ \phi_{ln}(ze_{lk}aq)=0,\  \phi_{kn}(p_kaqz)=\sum_{j=1}^{n-1}a_{kj}z_{jn},\hbox{ and } \phi_{ln}(e_{lk}aqz)=\sum_{j=1}^{n-1}a_{kj}z_{jn},$$ for every $a,z\in M_n.$ Thus, $$\phi_{ln}[z,p_kaq]= \phi_{kn}[z,e_{lk}aq]=0, \hbox{ and } \phi_{kn}[z,p_kaq] = \phi_{ln}[z,e_{lk}aq],$$ which proves $(i)$.\smallskip

$(ii)$ If $R=\emptyset$ (i.e. $r=0$) we have nothing to prove, otherwise we have
$$\phi_{k1}(zrap_n)= \phi_{k1}(rap_nz)= \phi_{kn}(zrae_{n1})= \phi_{kn}(rae_{n1}z)=\phi_{kn}(rap_nz)=0,$$
$$ \phi_{k1}(rae_{n1}z)=0,\ \phi_{kn}(zrap_n)=\sum_{j\in R}z_{kj}a_{jn}, \hbox{ and } \phi_{k1}(zrae_{n1})=\sum_{j\in R}z_{kj}a_{jn},$$  for every $a,z\in M_n,$ identities which prove the second statement.
\end{proof}

\begin{lemma}\label{l Delta(a+ekn)=0}
Let $\Delta: M_n\to M_n$ be a weak-2-local derivation, let $p_1, \ldots, p_n$ be mutually orthogonal minimal projections in $M_n$, and let $q=1-p_n.$ 
Suppose $R$ is a subset of $\{1,\ldots,n-1\}.$ We set $\displaystyle r=\sum_{i\in R}p_i$ when $R\neq \emptyset$, and $r=0$ when  $R=\emptyset$.
Let us assume that $\Delta (qaq + rap_n) = 0$, for every $a\in M_n,$ and $\Delta(e_{kn})=0,$ for some $1\leq k\leq n-1.$ Then $$\Delta (qaq + rap_n + \lambda e_{kn}) = 0,$$ for every $a\in M_n,$ and $\lambda\in\mathbb{C}.$
\end{lemma}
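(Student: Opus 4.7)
The plan is to show that the scalar $\alpha := \phi_{kn}\Delta(x_1)$, where $x_1 := qaq + rap_n + \lambda e_{kn}$, vanishes; indeed Lemma~\ref{l Delta(a+ekn)} already gives $\Delta(x_1) = \alpha\,e_{kn}$. The case $k\in R$ is immediate: $p_k\leq r$ forces $qe_{kn}q=0$ and $re_{kn}p_n=e_{kn}$, so $x_1 = qa'q + ra'p_n$ with $a' := a+\lambda e_{kn}$, and the hypothesis delivers $\Delta(x_1)=0$. From now on assume $k\notin R$.

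A direct computation from \eqref{eq [z,eij]} and \eqref{eq [z,pk]} yields
\[
\phi_{kn}[z,\,qcq + rcp_n] \;=\; -\sum_{j=1}^{n-1} c_{kj}\,z_{jn} \;+\; \sum_{i\in R} c_{in}\,z_{ki}\qquad(z\in M_n),
\]
which vanishes identically in $z$ whenever $c_{kj}=0$ for $j<n$ and $c_{in}=0$ for $i\in R$. Applying the weak-2-local behaviour of $\Delta$ at $\phi_{kn}$ to the pair $\bigl(x_1,\;q(a+c)q + r(a+c)p_n + \lambda e_{kn}\bigr)$ with such a $c$, the common derivation $[z,\cdot]$ takes the same value at both points, so $\phi_{kn}\Delta(x_1) = \phi_{kn}\Delta\bigl(q(a+c)q + r(a+c)p_n + \lambda e_{kn}\bigr)$; thus $\alpha$ depends on $a$ only through the ``relevant'' entries $\{a_{kj}\}_{j=1}^{n-1}$ and $\{a_{in}\}_{i\in R}$.

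Next, pick $l\in\{1,\ldots,n-1\}\setminus\{k\}$ and set $\phi := \phi_{ln}+\phi_{kn}$. Lemma~\ref{l phi[z,.]}(i) gives $\phi[z,p_kaq]=\phi[z,e_{lk}aq]$, so writing $qaq = p_kaq + (q-p_k)aq$ and $y := e_{lk}aq + (q-p_k)aq + rap_n + \lambda e_{kn}$ one has $\phi[z,x_1]=\phi[z,y]$ for every $z$. Since $l\neq n$, the piece $e_{lk}aq$ lies in $qM_nq$ and $y$ takes the form $qbq + rbp_n + \lambda e_{kn}$ for a matrix $b$ whose $k$-th row in columns $<n$ is zero (that row has been transferred to row $l$); Lemma~\ref{l Delta(a+ekn)} then gives $\Delta(y) = \alpha(b,\lambda)e_{kn}$. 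The weak-2-local identity $\phi\Delta(x_1)=\phi\Delta(y)$, together with $\phi(e_{kn})=1$, yields $\alpha(a,\lambda)=\alpha(b,\lambda)$. A symmetric application of Lemma~\ref{l phi[z,.]}(ii) with $\phi=\phi_{k1}+\phi_{kn}$ (legitimate because $k\notin R$) then produces $c$ with $\alpha(b,\lambda)=\alpha(c,\lambda)$ and with \emph{every} relevant entry of $c$ equal to zero.

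Finally, the invariance of the second paragraph, applied now to the irrelevant entries of $c$, gives $\alpha(c,\lambda)=\alpha(0,\lambda)$. Weak-2-local derivations are $1$-homogeneous (as noted just before Lemma~\ref{l Delta(a+lambda1)}), so $\Delta(\lambda e_{kn}) = \lambda\Delta(e_{kn}) = 0$, and hence $\alpha(0,\lambda)=0$. Chaining the equalities yields $\alpha(a,\lambda)=0$, i.e.\ $\Delta(x_1)=0$. The delicate step is the row reduction: it requires an index $l\in\{1,\ldots,n-1\}\setminus\{k\}$ so that the intermediate element $y$ remains in the scope of Lemma~\ref{l Delta(a+ekn)}. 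This is automatic whenever $n\geq 3$; for $n=2$ with $R=\emptyset$, where no such $l$ exists, a separate direct argument in $M_2$ would be needed, and this small-dimensional case is where I expect the main technical obstacle to arise.
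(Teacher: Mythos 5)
Your proof is correct and follows essentially the same route as the paper's: Lemma~\ref{l Delta(a+ekn)} localizes everything to the $(k,n)$-entry, Lemma~\ref{l phi[z,.]}$(i)$ with $\phi_{ln}+\phi_{kn}$ performs the row reduction, Lemma~\ref{l phi[z,.]}$(ii)$ with $\phi_{k1}+\phi_{kn}$ the column reduction, and the argument closes with $\Delta(\lambda e_{kn})=\lambda\Delta(e_{kn})=0$. The two caveats you flag are real but harmless: the paper's own proof likewise implicitly assumes $k\notin R$ (the trivial case you dispatch first, since Lemma~\ref{l phi[z,.]}$(ii)$ is only stated for $k\notin R$) and likewise needs an index $l\ne k$ in $\{1,\ldots,n-1\}$, i.e.\ $n\ge 3$, which suffices because the lemma is only invoked in the induction step of Proposition~\ref{p weak-2-local symmetric derivations on Mn are derivations}, where $n\ge 3$.
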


\begin{proof} It is easy to see that $\phi_{kn}[z,(1-p_k)qaq]=0,$ for every $z,a\in M_n.$ Thus,
  $$\phi_{kn}[z,(1-p_k)qaq + \lambda e_{kn}] = \phi_{kn}[z,\lambda e_{kn}].$$
  Combining this identity with the weak-2-local property of $\Delta$ we obtain
  \begin{equation}\label{eq 0315 2}
    \phi_{kn}\Delta((1-p_k)qaq + \lambda e_{kn}) = \phi_{kn}\Delta(\lambda e_{kn})=0.
  \end{equation}

By hypothesis, we have $\Delta (qaq + rap_n) = 0$ for every $a\in M_n$, thus, Lemma \ref{l Delta(a+ekn)} implies that
$$\Delta(qaq+ rap_n + \lambda e_{kn})=p_k\Delta(qaq+ rap_n + \lambda e_{kn})p_n,$$ which, in particular, assures that
$$\Delta((1-p_k)qaq + \lambda e_{kn})=p_k\Delta((1-p_k)qaq + \lambda e_{kn})p_n,$$ (we just replace $a$ with $(1-p_k)aq$).
Combining this equality with identity \eqref{eq 0315 2}, we prove
  \begin{equation}\label{eq 0309 5}
    \Delta((1-p_k)qaq + \lambda e_{kn})=0,
  \end{equation}
  for every $a\in M_n.$ \smallskip

Pick $1\leq l \leq n-1, l\ne k,$ and take the functional $\phi = \phi_{ln} + \phi_{kn}.$ By Lemma \ref{l phi[z,.]}$(i)$, we have $\phi[z,p_kqaq] = \phi[z,e_{lk}qaq],$ for every $z,a\in M_n.$ Adding appropriate elements in both sides of this equality and using the weak-2-local behavior of $\Delta$, we obtain
  \begin{equation}\label{eq 0315 3}
    \phi\Delta(qaq + \lambda e_{kn}) = \phi\Delta((1-p_k)qaq + e_{lk}qaq + \lambda e_{kn})
  \end{equation}
  $$=\phi\Delta((1-p_k)q(a + e_{lk}qa)q + \lambda e_{kn})=\hbox{(by \eqref{eq 0309 5})}=0.$$ We can apply Lemma \ref{l Delta(a+ekn)} (with $r=0$), to show that
  \begin{equation}\label{eq 0315 4}
    \Delta(qaq + \lambda e_{kn})=p_k\Delta(qaq + \lambda e_{kn})p_n,
  \end{equation} and hence $\phi_{ln}\Delta(qaq + \lambda e_{kn}) = 0,$ for every $a\in M_n$, which in combination with \eqref{eq 0315 3}, shows that $\phi_{kn}\Delta(qaq + \lambda e_{kn}) = 0.$  Applying \eqref{eq 0315 4}, we deduce that
  \begin{equation}\label{eq 0309 6}
    \Delta(qaq + \lambda e_{kn})=0,\ \ (a\in M_n).
  \end{equation}

Take the functional $\phi = \phi_{k1} + \phi_{kn}.$ Lemma \ref{l phi[z,.]}$(ii)$ shows that $\phi[z,rap_n]=\phi[z,rae_{n1}],$ for every $z,a\in M_n.$ Adding appropriate elements in both sides of this equality and using the weak-2-local behavior of $\Delta$, we obtain
  $$\phi\Delta(qaq + rap_n + \lambda e_{kn}) = \phi\Delta(qaq + rae_{n1} + \lambda e_{kn})$$
  $$=\phi\Delta(q(a + rae_{n1})q + \lambda e_{kn})=\hbox{(by \eqref{eq 0309 6})}=0.$$
Lemma \ref{l Delta(a+ekn)} and the previous identity prove the desired statement.
\end{proof}

\begin{lemma}\label{l Delta(qaq + qapn + pnaq)=0}
Let $\Delta: M_n\to M_n$ be a weak-2-local derivation, let $p_1, \ldots, p_n$ be mutually orthogonal minimal projections in $M_n$, and let $q=1-p_n.$
Suppose that $\Delta^\sharp=\Delta$ and $\Delta (qa)=0,$ for every $a\in M_n.$ Then $$\Delta(qa + p_naq)=0,\ \ (a\in M_n).$$
\end{lemma}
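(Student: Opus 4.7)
The plan is to show $\Delta(y)=0$ for $y:=qa+p_naq$ by establishing that each scalar $\phi_{i,j}\Delta(y)$ — i.e., the $(i,j)$-entry of $\Delta(y)$ — vanishes. First I would extract a ``dual'' hypothesis from the symmetry: since $\Delta^\sharp=\Delta$ is equivalent to $\Delta(x^*)=\Delta(x)^*$, the assumption $\Delta(qa)=0$ for every $a$ forces $\Delta(bq)=\Delta((qb^*)^*)=\Delta(qb^*)^*=0$ for every $b\in M_n$, so $\Delta$ vanishes on both $qM_n$ and $M_n q$.

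For the off-diagonal blocks $q\Delta(y)p_n$ and $p_n\Delta(y)q$, I would pair the functional $\phi=\phi_{i,n}$ ($i<n$) with the auxiliary $b=qa$ (for which $\Delta(b)=0$). Using the commutator identity \eqref{eq [z,eij]}, the $(i,n)$-entry of $[z,e_{n,k}]$ is zero whenever $i\neq n$ and $k<n$, so $\phi_{i,n}[z,y-b]=\phi_{i,n}[z,p_naq]=0$ for every $z\in M_n$. The weak-$2$-local property at $(y,b)$ then produces some $z$ with $\phi_{i,n}\Delta(y)=\phi_{i,n}[z,y]=\phi_{i,n}[z,b]+\phi_{i,n}[z,y-b]=0$. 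The symmetric computation with $\phi=\phi_{n,j}$ ($j<n$) and $b=aq$ (so $y-b=qap_n$) handles the other off-diagonal block.

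For the diagonal blocks $q\Delta(y)q$ and the entry $\phi_{n,n}\Delta(y)$, the direct method fails: a short calculation with \eqref{eq [z,eij]} gives $\phi_{i,j}[z,p_naq]=a_{n,j}\,z_{i,n}$ for $i,j<n$, and analogously $\phi_{i,j}[z,qap_n]=-a_{i,n}\,z_{n,j}$, neither of which is identically zero in $z$. Here I plan to combine the two weak-$2$-local applications at $(y,qa)$ and $(y,aq)$ with Lemma \ref{l Delta(a+lambda1)} (to augment the auxiliary by multiples of projections without changing $\Delta$) and Lemma \ref{l Delta(a+ekn)=0} (to enlarge the kernel of $\Delta$ by matrix units of the form $\lambda e_{kn}$), in order to accumulate enough simultaneous constraints on the implicit derivations $z$ to force $\phi_{i,j}\Delta(y)=0$ for all $i,j<n$. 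The $(n,n)$-entry then vanishes automatically: applying the weak-$2$-local property with the trace functional and using that every commutator in $M_n$ is traceless yields $\sum_i\phi_{i,i}\Delta(y)=0$.

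The principal obstacle is exactly this last step. Unlike in the off-diagonal case, there is no single auxiliary $b\in qM_n\cup M_nq$ that makes $\phi_{i,j}[z,y-b]$ vanish identically in $z$ for $i,j<n$. Overcoming this will therefore require a cumulative argument in the spirit of Lemma \ref{l D}: one must combine several choices of auxiliaries and functionals to cancel the residual $a_{n,j}\,z_{i,n}$ and $a_{i,n}\,z_{n,j}$ terms simultaneously, rather than hoping for a one-shot cancellation.
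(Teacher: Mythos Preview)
Your off-diagonal arguments are fine and essentially match the paper (the paper uses trace-type functionals $\phi(\cdot)=\operatorname{tr}(p_ntq\,\cdot)$ rather than the individual $\phi_{i,n}$, but the content is the same). Your trace argument for the $(n,n)$ entry is also valid; the paper quotes \cite[Lemma~3.1]{NiPe} for this.

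The genuine gap is the $q\Delta(y)q$ block. Your proposed tools do not bite here. Lemma~\ref{l Delta(a+ekn)=0} with $R=\{1,\dots,n-1\}$ and $r=q$ has as hypothesis exactly $\Delta(qa)=0$, and its conclusion is $\Delta(qa+\lambda e_{kn})=0$; but $qa+\lambda e_{kn}$ is again of the form $qb$, so nothing new is gained. Lemma~\ref{l Delta(a+lambda1)} only lets you trade $\lambda p_n$ for $-\lambda q$, which again keeps you inside elements of the form $qb+p_nbq$. More fundamentally, the weak-2-local property gives you \emph{one} $z$ per pair $(y,b,\phi)$, so the residuals $a_{n,j}z_{i,n}$ (from the auxiliary $b=qa$) and $-a_{i,n}z'_{n,j}$ (from $b=aq$) involve unrelated $z,z'$ and cannot be played off against each other; there is no ``cumulative'' mechanism of the Lemma~\ref{l D} type available, because there you could move along a chain of elements all lying in the known kernel, whereas here the target $y$ has a $p_naq$ component that never enters $qM_n\cup M_nq$.

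The paper's device, which you are missing, is to choose the functional itself to kill both off-diagonal residuals simultaneously: take $\phi(\cdot)=\operatorname{tr}\big((qtq+qap_n+p_naq)\,\cdot\big)$ for arbitrary $t\in M_n$, and use the identity $\operatorname{tr}(x[z,x])=0$ with $x=qtq+qap_n+p_naq$. This forces $\phi[z,y]=\phi[z,q(a-t)q]$ for \emph{every} $z$, so the weak-2-local comparison with $q(a-t)q\in qM_n$ gives $\phi\Delta(y)=0$. Since the already-established vanishing of $q\Delta(y)p_n$ and $p_n\Delta(y)q$ reduces $\phi\Delta(y)$ to $\operatorname{tr}(t\,q\Delta(y)q)$, and $t$ is arbitrary, one gets $q\Delta(y)q=0$. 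The point is that the trace identity lets a single functional absorb both the $qap_n$ and $p_naq$ parts at once, which no entrywise functional $\phi_{i,j}$ can do.
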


\begin{proof}
Let $a$ be an element in $M_n.$ Let us fix an arbitrary $t\in M_n$, and take the functional $\phi(\cdot)={\rm tr\,}(p_ntq\,\cdot)$ in $M_n^*,$ where tr is the unique unital trace on $M_n$.\smallskip

Since $\phi[z,p_naq]=0,$ for every $z\in M_n,$ it follows that
$$\phi[z,qa + p_naq]=\phi[z,qa],$$ for every $z\in M_n$. Having in mind this identity, we deduce, by the weak-2-local property of $\Delta$ at the points $qa + p_naq$ and  $qa,$ that
  $$\phi\Delta(qa + p_naq)=\phi\Delta(qa)=0.$$ The precise form of $\phi$ implies that $${\rm tr\,}( tq\Delta(qa + p_naq)p_n)= {\rm tr\,}(p_n tq\Delta(qa + p_naq))=\phi\Delta(qa + p_naq) =0,$$ for every $t\in M_n,$ which shows that,
  \begin{equation}\label{eq 0314 1}
    q\Delta(qa + p_naq)p_n=0,\ \ (a\in M_n).
  \end{equation} The condition $\Delta^{\sharp}= \Delta$ implies that $$p_n\Delta(aq + qap_n)q=0, \ \ (a\in M_n).$$ Now, we apply the identity
$aq + qap_n = qaq + p_n aq + qap_n = qa + p_n aq$ to obtain 
\begin{equation}\label{eq 0314 2}
p_n\Delta(q a+ p_n a q)q=0,\ \ (a\in M_n).
\end{equation}

Again, let us fix an arbitrary $t\in M_n$, and take the functional $\phi(\cdot)={\rm tr\,}((qtq + qap_n + p_naq)\,\cdot).$ Since ${\rm tr\,}(x[z,x])=0,$ for every $x,z\in M_n,$ we see that $\phi[z,qtq + qap_n + p_naq]=0,$ and hence $$\phi[z,qa + p_naq] = \phi[z,q(a-t)q] \ (z\in M_n).$$
The weak-2-local property of $\Delta$, at the points $qa + p_naq$ and $q(a-t)q,$ and the above identity, show that \begin{equation}\label{eq 0419 1} \phi\Delta(qa + p_naq) = \phi\Delta(q(a-t)q) = 0.
\end{equation}

Since $\phi(p_n\Delta(qa + p_naq)p_n)=0,$ we deduce from \eqref{eq 0314 1}, \eqref{eq 0314 2}, and \eqref{eq 0419 1} that  $$\phi(q\Delta(qa + p_naq)q) = 0,$$ thus ${\rm tr\,}(tq\Delta(qa + p_naq)q)=0,$ for every $t\in M_n,$ which shows that, $$q\Delta(qa +
  p_naq)q=0.$$ The conclusion of the lemma follows from this identity, together with \eqref{eq 0314 1}, \eqref{eq 0314 2}, and \cite[Lemma 3.1]{NiPe}.
\end{proof}

\begin{lemma}\label{l Delta(sum e1j+ej1)}
Let $\Delta: M_n\to M_n$ be a weak-2-local derivation, let $p_1, \ldots, p_n$ be mutually orthogonal minimal projections in $M_n$ satisfying $\Delta (p_k) = 0$ $(1\leq k\leq n).$ If $\Delta(p_1+e_{1j_0}-e_{i_01}) = 0,$ for some $2\leq i_0,j_0\leq n,$ then $\Delta(e_{i_0j_0})=0,$ where $e_{ij}$ is the unique minimal partial isometry in $M_n$ satisfying $e_{ij}^* e_{ij} = p_j$ and $e_{ij} e_{ij}^* = p_i.$
\end{lemma}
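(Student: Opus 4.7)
The case $i_0 = j_0$ reduces to the hypothesis (then $e_{i_0 j_0} = p_{i_0}$ and $\Delta(p_{i_0})=0$), so I assume $i_0 \neq j_0$. The overall strategy is to show entry-by-entry that every matrix coefficient of $\Delta(e_{i_0 j_0})$ vanishes, by repeatedly invoking the weak-2-local property with carefully chosen functionals and probe elements drawn from the set $\mathcal{Z} := \{p_1, p_2, \dots, p_n,\; p_1 + e_{1 j_0} - e_{i_0 1}\}$ on which $\Delta$ is already known to vanish.

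Since every derivation on $M_n$ is inner, for each triple $(e_{i_0 j_0}, b, \phi)$ the weak-2-local property produces some $z = z(b,\phi) \in M_n$ with $\phi\Delta(e_{i_0 j_0}) = \phi[z, e_{i_0 j_0}]$ and $\phi\Delta(b) = \phi[z, b]$. By identity~\eqref{eq [z,eij]}, $[z, e_{i_0 j_0}]$ is supported on the ``cross'' $R := \{(i_0, l) : 1 \leq l \leq n\} \cup \{(k, j_0) : 1 \leq k \leq n\}$. Hence for every $(k,l) \notin R$, picking $\phi = \phi_{kl}$ and any $b \in \mathcal{Z}$ immediately forces $\phi_{kl}\Delta(e_{i_0 j_0}) = \phi_{kl}[z, e_{i_0 j_0}] = 0$, so $\Delta(e_{i_0 j_0})$ is supported in $R$, leaving $2n-1$ entries to annihilate.

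For each remaining target $(k,l) \in R$ I would design a functional of the form $\phi = \phi_{kl} + \sum_i \alpha_i \phi_{k_i l_i}$, with correction indices $(k_i, l_i) \notin R$, together with a probe $b \in \mathcal{Z}$, chosen so that the identity $\phi[z,b] = 0$ (forced by $\phi\Delta(b)=0$) in turn forces $\phi[z, e_{i_0 j_0}] = 0$; the correction terms vanish by the previous paragraph, yielding $\Delta(e_{i_0 j_0})_{kl} = 0$. As a representative instance, for $(k, j_0)$ with $k \notin \{1, i_0, j_0\}$, take $b = p_1 + e_{1 j_0} - e_{i_0 1}$ and $\phi = \phi_{k j_0} - \phi_{k 1}$: a short calculation from \eqref{eq [z,eij]} and \eqref{eq [z,pk]} gives $\phi[z,b] = z_{k,1} - (z_{k,1} - z_{k, i_0}) = z_{k,i_0}$, while $\phi[z,e_{i_0 j_0}] = z_{k, i_0} - 0 = z_{k,i_0}$, so $0 = \phi\Delta(b) = z_{k, i_0} = \phi\Delta(e_{i_0 j_0})$, giving $\Delta(e_{i_0 j_0})_{k, j_0} = 0$. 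Analogous constructions, alternating the probe $b$ between $p_1 + e_{1 j_0} - e_{i_0 1}$ and the projections $p_1, p_{i_0}, p_{j_0}$, handle the remaining row-$i_0$ and column-$j_0$ entries.

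The main obstacle is the cluster of ``corner'' entries $(1, j_0)$, $(i_0, 1)$, $(i_0, j_0)$, $(i_0, i_0)$, $(j_0, j_0)$, where the three summands defining $p_1 + e_{1 j_0} - e_{i_0 1}$ interact so that the corresponding entries of $[z, p_1 + e_{1 j_0} - e_{i_0 1}]$ mix several independent coefficients of $z$ (in particular the three diagonal entries $z_{11}, z_{i_0 i_0}, z_{j_0 j_0}$, which govern the $(i_0, j_0)$-coefficient $z_{i_0 i_0} - z_{j_0 j_0}$ of $[z, e_{i_0 j_0}]$). Decoupling these requires combining two or three weak-2-local evaluations at different $b \in \mathcal{Z}$ and adding/subtracting the resulting scalar identities to isolate each diagonal entry; this is the most delicate piece of bookkeeping, but the reasoning is elementary and parallel in spirit to the manipulations already carried out in Lemmas~\ref{l Delta(a+ekn)}, \ref{l phi[z,.]} and \ref{l Delta(a+ekn)=0}. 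Once all $2n-1$ cross entries have been killed, $\Delta(e_{i_0 j_0}) = 0$ follows.
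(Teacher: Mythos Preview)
Your overall scaffold matches the paper: handle $i_0=j_0$ trivially, kill the off-cross entries using a single application of weak-2-locality, then chase down the row-$i_0$ and column-$j_0$ entries. Your representative computation for a generic $(k,j_0)$ is correct, although the paper does this more simply by probing against $p_{i_0}$ and $p_{j_0}$ (via $\phi=\phi_{ij_0}+\phi_{ii_0}$ and $\phi=\phi_{i_0j}+\phi_{j_0j}$), which in fact already disposes of \emph{all} cross entries except $(i_0,j_0)$ --- including your ``corner'' cases $(1,j_0),\ (i_0,1),\ (i_0,i_0),\ (j_0,j_0)$.

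The genuine gap is in your plan for the remaining entry $(i_0,j_0)$. You propose to ``combine two or three weak-2-local evaluations at different $b\in\mathcal Z$ and add/subtract the resulting scalar identities to isolate each diagonal entry'' of $z$. That cannot work: each invocation of weak-2-locality (with a new $b$ or a new $\phi$) produces a \emph{new} witness $z$, so a constraint such as $z_{11}-z_{j_0j_0}=0$ from one application and $z_{i_0i_0}-z_{11}=0$ from another are statements about different matrices and cannot be added. This is exactly the obstruction that separates weak-2-local from 2-local problems, and the scheme you describe would require the same $z$ throughout.

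The paper bypasses this by exhibiting a \emph{single} functional
\[
\phi=\phi_{1j_0}-\phi_{11}+\phi_{i_0j_0}-\phi_{i_01}
\]
for which the identity $\phi[z,e_{i_0j_0}]=\phi[z,\,p_1+e_{1j_0}-e_{i_01}]$ holds for every $z\in M_n$; one application of weak-2-locality then gives $\phi\Delta(e_{i_0j_0})=0$, and the three correction terms $\phi_{1j_0},\,\phi_{11},\,\phi_{i_01}$ are already known to annihilate $\Delta(e_{i_0j_0})$ from the earlier steps (note that two of these correction indices lie \emph{on} the cross, contrary to your template). So no decoupling of diagonal entries of $z$ is ever needed --- the trick is to build the right four-term functional, not to combine several evaluations.
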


\begin{proof} If $i_0=j_0,$ then the conclusion is clear. So, we fix $i_0\ne j_0$ in $\{2,\ldots,n\}.$ Since $\phi_{ij}[z,e_{i_0j_0}]=0$ for every $z\in M_n,$ and $1\le i,j\leq n,$ $i\ne i_0,$ $j\ne j_0,$ we obtain, from the weak-2-local property of $\Delta,$ that
  \begin{equation}\label{eq 0314 4}
    \phi_{ij}\Delta(e_{i_0j_0})=0,\ \ (1\le i,j\leq n,\ i\ne i_0,\ j\ne j_0).
  \end{equation}

Take the functional $\phi=\phi_{i_0j}+\phi_{j_0j}$ $(1\leq j\leq n, j\ne j_0),$ and consider the identity $\phi[z,e_{i_0j_0}]=\phi[z,p_{j_0}].$ The weak-2-local behavior of $\Delta$ at the points $e_{i_0j_0}$ and $p_{j_0},$ assures that
  $$\phi\Delta(e_{i_0j_0})=\phi\Delta(p_{j_0})=0.$$
  Since $\phi_{j_0j}\Delta(e_{i_0j_0})=0$ (by \eqref{eq 0314 4}), we obtain
  \begin{equation}\label{eq 0314 5}
    \phi_{i_0j}\Delta(e_{i_0j_0})=0,\ \ (1\leq j\leq n, j\ne j_0).
  \end{equation}

Similarly, by taking the functional $\phi=\phi_{ij_0}+\phi_{ii_0}$ $(1\leq i\leq n, i\ne i_0),$ and applying the weak-2-local property of $\Delta$ at the points $e_{i_0j_0}$ and $p_{i_0},$ we get
  \begin{equation}\label{eq 0314 6}
    \phi_{ij_0}\Delta(e_{i_0j_0})=0,\ \ (1\leq i\leq n, i\ne i_0).
  \end{equation}

Take the functional $\phi = \phi_{1j_0} - \phi_{11} + \phi_{i_0j_0} - \phi_{i_01}.$ Using the bilinearity of the Lie product and identities \eqref{eq [z,eij]} and \eqref{eq [z,pk]}, it is not hard to see that $\phi[z,e_{i_0j_0}]=\phi[z,p_1+e_{1j_0}-e_{i_01}],$ for every $z\in M_n.$ Therefore, the weak-2-local property of $\Delta,$ at the points $e_{i_0j_0}$ and $p_1+e_{1j_0}-e_{i_01},$ gives us the following:
  $$\phi\Delta(e_{i_0j_0}) = \phi\Delta(p_1+e_{1j_0}-e_{i_01})=0.$$
The desired statement follows from \eqref{eq 0314 4}, \eqref{eq 0314 5} and \eqref{eq 0314 6}.
\end{proof}

\begin{lemma}\label{l Delta=0}
Let $\Delta: M_n\to M_n$ be a symmetric {\rm(}i.e. $\Delta^\sharp=\Delta${\rm)} weak-2-local derivation, let $p_1, \ldots, p_n$ be mutually orthogonal minimal projections in $M_n$ with $q= 1-p_n$.  Suppose $\Delta(qaq)=0,$ for every $a\in M_n,$ and $\Delta(e_{1n})=0,$ where $e_{ij}$ is the unique minimal partial isometry in $M_n$ satisfying $e_{ij}^* e_{ij} = p_j$ and $e_{ij} e_{ij}^* = p_i.$ Then $\Delta\equiv0.$
\end{lemma}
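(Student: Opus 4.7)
The plan is to use the Peirce decomposition of $M_n$ with respect to $p_n$ and show that $\Delta$ vanishes on each block by building the off-diagonal contributions iteratively from the two given vanishings (on $qM_nq$ and on $e_{1n}$) via the preceding lemmas. As a preliminary observation, $\Delta(p_k)=0$ for every $k\in\{1,\ldots,n\}$: for $k<n$ this is immediate because $p_k\in qM_nq$, while for $k=n$ Lemma \ref{l Delta(a+lambda1)} with $a=0$, $p=p_n$, $\lambda=1$ gives $\Delta(p_n)=\Delta(-q)=-\Delta(q)=0$ by $1$-homogeneity and the hypothesis. Moreover, Lemma \ref{l Delta(a+ekn)=0} applied with $R=\emptyset$ (so $r=0$) and $k=1$, whose hypotheses $\Delta(qaq)=0$ and $\Delta(e_{1n})=0$ are given, yields
\[
\Delta(qaq+\lambda e_{1n})=0\qquad(a\in M_n,\ \lambda\in\mathbb{C}).
\]

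Next, I would upgrade this to $\Delta(e_{kn})=0$ for every $1\leq k\leq n-1$. Fix $k\in\{2,\ldots,n-1\}$; since $p_1-e_{k1}\in qM_nq$, the previous display with $a=p_1-e_{k1}$ and $\lambda=1$ gives $\Delta(p_1+e_{1n}-e_{k1})=0$, and Lemma \ref{l Delta(sum e1j+ej1)} with $i_0=k$, $j_0=n$ then yields $\Delta(e_{kn})=0$.

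I would then run an induction on $s\in\{1,\ldots,n-1\}$: setting $r_s=p_1+\cdots+p_s$, prove that $\Delta(qaq+r_sap_n)=0$ for every $a\in M_n$. The base $s=1$ is the first display, since $p_1ap_n=a_{1n}e_{1n}$. For the inductive step, Lemma \ref{l Delta(a+ekn)=0} applied with $R=\{1,\ldots,s\}$ and $k=s+1$ (valid because $\Delta(e_{s+1,n})=0$ by the previous paragraph) gives $\Delta(qaq+r_sap_n+\lambda e_{s+1,n})=0$, and taking $\lambda=a_{s+1,n}$ absorbs the new term into $r_{s+1}ap_n$. At $s=n-1$ this yields $\Delta(qa)=0$ for every $a\in M_n$, hence Lemma \ref{l Delta(qaq + qapn + pnaq)=0} (whose hypotheses $\Delta^\sharp=\Delta$ and $\Delta(qa)=0$ are now met) produces
\[
\Delta(qaq+qap_n+p_naq)=0\qquad(a\in M_n).
\]

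Finally, for the diagonal corner $p_nap_n=\mu p_n$, Lemma \ref{l Delta(a+lambda1)} with $p=p_n$ gives, for $b=qaq+qap_n+p_naq$ and any $\mu\in\mathbb{C}$,
\[
\Delta(b+\mu p_n)=\Delta(b-\mu q)=\Delta\bigl(q(a-\mu)q+q(a-\mu)p_n+p_n(a-\mu)q\bigr)=0,
\]
by the previous display with $a$ replaced by $a-\mu$ (using $qp_n=p_nq=0$). Since every $c\in M_n$ admits the Peirce decomposition $c=qcq+qcp_n+p_ncq+\mu p_n$ for a unique $\mu\in\mathbb{C}$, this forces $\Delta(c)=0$, so $\Delta\equiv0$. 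The main obstacle I anticipate is organizing the induction in the third paragraph so that the hypotheses of Lemma \ref{l Delta(a+ekn)=0} are exactly met at each stage; once past that, the final extensions via Lemmas \ref{l Delta(qaq + qapn + pnaq)=0} and \ref{l Delta(a+lambda1)} are direct.
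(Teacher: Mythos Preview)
Your proof is correct and follows essentially the same approach as the paper's: the paper compresses your Steps~2--4 into the single phrase ``Applying Lemmas~\ref{l Delta(a+ekn)=0} and~\ref{l Delta(sum e1j+ej1)} we deduce, after a finite number of steps, that $\Delta(qa)=0$,'' and then proceeds exactly as you do via Lemma~\ref{l Delta(qaq + qapn + pnaq)=0} and Lemma~\ref{l Delta(a+lambda1)}. Your explicit verification that $\Delta(p_k)=0$ for all $k$ (needed for Lemma~\ref{l Delta(sum e1j+ej1)}) and your careful organization of the induction on $s$ are welcome details that the paper leaves to the reader.
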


\begin{proof} Applying Lemmas \ref{l Delta(a+ekn)=0} and \ref{l Delta(sum e1j+ej1)} we deduce, after a finite number of steps, that $\Delta(qa)=0,$ for every $a\in M_n.$ Applying Lemma \ref{l Delta(qaq + qapn + pnaq)=0} we prove that \begin{equation}\label{eq 0420} \Delta (q a + p_n a q) =0, \ (a\in M_n).
 \end{equation}

Finally, $$\Delta (a ) =  \Delta (q a + p_n a q + \lambda_a p_n)= \hbox{(by Lemma \ref{l Delta(a+lambda1)})} = \Delta (q a + p_n a q - \lambda_a q)  $$ $$= \Delta (q (a - \lambda_a) + p_n (a - \lambda_a) q)=\hbox{(by \eqref{eq 0420})} = 0,$$
which completes the proof.
\end{proof}

The next result is a strengthened version of \cite[Proposition 3.8]{NiPe}.

\begin{proposition}\label{p Delta(pk,e1j,ej1)}
Let $\Delta: M_n\to M_n$ be a weak-2-local derivation, let $p_1, \ldots, p_n$ be mutually orthogonal minimal projections in $M_n$, and let $e_{ij}$ denote the unique minimal partial isometry in $M_n$ satisfying $e_{ij}^* e_{ij} = p_j$ and $e_{ij} e_{ij}^* = p_i.$
Then there exists an element $w_0\in M_n,$ such that $$\Delta\left(\sum_{k=1}^n\lambda_kp_k\right) = \left[w_0,\sum_{k=1}^n\lambda_kp_k\right],$$
  for every $\lambda_k\in\mathbb{C},$ and $\Delta(e_{1j})=[w_0,e_{1j}]$ $(2\leq j\leq n).$
\end{proposition}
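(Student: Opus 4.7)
The plan is to extract a single matrix $w_0 \in M_n$ whose off-diagonal entries are forced by the data $\{\Delta(p_k)\}_{k=1}^{n}$ and whose diagonal entries are chosen so that $[w_0, \cdot]$ simultaneously recovers $\Delta$ on every $e_{1j}$. Once $w_0$ is in hand, verifying $\Delta(a) = [w_0, a]$ for an arbitrary element $a$ of the relevant form reduces, via the functionals $\phi_{ij}$, to entrywise identities; each such identity will be checked by applying weak-2-locality at a carefully chosen pair and a short signed combination of matrix-unit functionals, and the explicit commutator formulas \eqref{eq [z,eij]} and \eqref{eq [z,pk]} will do most of the bookkeeping.

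First I will invoke \cite[Proposition 3.8]{NiPe}, of which the present statement is announced to be a strengthening, to produce $w_0 \in M_n$ with $\Delta(p_k) = [w_0, p_k]$ for every $k = 1, \ldots, n$. The consistency underlying this extraction — that the $(k,l)$-entries read off from $\Delta(p_k)$ and from $\Delta(p_l)$ agree up to the expected sign — is itself a weak-2-local identity at the pair $(p_k, p_l)$ under $\phi_{kl}$ combined with \eqref{eq [z,pk]}. The diagonal of $w_0$ remains free; I tentatively set $(w_0)_{11} = 0$ and $(w_0)_{jj} = -\phi_{1j}\Delta(e_{1j})$ for $j = 2, \ldots, n$, so that by construction $[w_0, e_{1j}]_{1j} = \phi_{1j}\Delta(e_{1j})$ for every $j$.

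Next I will establish additivity on the spectral subalgebra $\mathrm{span}\{p_1, \ldots, p_n\}$. For $a = \sum_k \lambda_k p_k$ the identity \eqref{eq [z,pk]} gives $\phi_{ii}[z, a] = 0$ and $\phi_{ij}[z, a] = (\lambda_j - \lambda_i)\, z_{ij}$ for $i \neq j$; combining these with the pairings $(a, a, \phi_{ii})$ and $(a, p_i, \phi_{ij})$ respectively, and invoking Step~1 to evaluate $\phi_{ij}\Delta(p_i)$, one checks entrywise that $\phi_{ij}\Delta(a) = \phi_{ij}[w_0, a]$ at every matrix-unit position. Finally, for $\Delta(e_{1j})$ I use \eqref{eq [z,eij]} to observe that $[w_0, e_{1j}]$ is supported only in row~$1$ and column~$j$: the positions outside that support are killed on both sides by testing weak-2-locality at $\phi_{kl}$ with $k \neq 1$, $l \neq j$; for the $(k,j)$-entry with $k \neq 1$ I use the functional $\phi = \phi_{kj} - \phi_{k1}$ paired with the projection $p_1$, so that $\phi[z, e_{1j}] = z_{k1} = -\phi[z, p_1]$; for the $(1,l)$-entry with $l \neq j$ I use $\phi = \phi_{1l} - \phi_{jl}$ paired with $p_j$, so that $\phi[z, e_{1j}] = -z_{jl} = -\phi[z, p_j]$. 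In each case weak-2-locality reduces the entry of $\Delta(e_{1j})$ to an already-known entry of $\Delta(p_k) = [w_0, p_k]$ with the correct sign; combined with the prescription of the diagonal of $w_0$ made in Step~1, this yields $\Delta(e_{1j}) = [w_0, e_{1j}]$.

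The main obstacle I foresee is precisely this third step: finding, for each transverse entry of $\Delta(e_{1j})$, an auxiliary functional $\phi$ (a short signed sum of $\phi_{ij}$'s) and a companion diagonal projection so that $\phi[z, e_{1j}]$ isolates a single off-diagonal coefficient of $z$ and coincides, up to sign, with $\phi[z, p_k]$ for some $k$. Designing these pairings is a careful case analysis driven by \eqref{eq [z,eij]} and \eqref{eq [z,pk]}, and it is what forces the statement to restrict to the specific family $\{e_{1j}\}$; once such $\phi$ is available, Step~1 closes the computation.
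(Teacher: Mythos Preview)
Your proposal is correct and follows essentially the same route as the paper: both extract the off-diagonal part of $w_0$ from the compatibility of the $\Delta(p_k)$, fix its diagonal from the $(1,j)$-entries $\phi_{1j}\Delta(e_{1j})$, and then verify the remaining entries of $\Delta(e_{1j})$ by weak-2-local comparison with $p_1$ and $p_j$ at short combinations of matrix-unit functionals. The only cosmetic differences are that the paper passes to the auxiliary map $\widehat\Delta=\Delta-[z_0,\cdot]$ and cites \cite[Proposition~3.4]{NiPe} for additivity on diagonal elements, whereas you keep $\Delta$ throughout and check that additivity entrywise; your explicit functionals $\phi_{kj}-\phi_{k1}$ and $\phi_{1l}-\phi_{jl}$ are precisely the ``appropriate choosing of functionals'' that the paper leaves to the reader.
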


\begin{proof}
Let $k\in\{1,\ldots,n\}$. Since  $\Delta$ is a weak-2-local derivation, \eqref{eq [z,pk]} shows that
\begin{equation}\label{eq Delta pk}
  \Delta(p_k) = \sum_{i=1, i\neq k}^{n} \alpha_{ik}^{(k)} e_{ik} + \alpha_{ki}^{(k)} e_{ki}\,,
\end{equation}
for suitable $\alpha_{ik}^{(k)},\alpha_{ki}^{(k)}\in\mathbb{C}$. Another application of the weak-2-local behavior of $\Delta,$ at the functional $\phi_{i_0j_0},$ and the points $p_{i_0}$ and $p_{j_0}$, proves that
\begin{equation}\label{eq alpha(i)ij=-alpha(j)ij}
  \alpha_{i_0j_0}^{(i_0)} = -\alpha_{i_0j_0}^{(j_0)}\,,\quad (1\leq i_0,j_0 \leq n, i_0\ne j_0).
\end{equation}
Let us define $$z_0 := - \sum_{i<j}\alpha_{ij}^{(i)}e_{ij} + \sum_{i>j}\alpha_{ij}^{(j)}e_{ij}\,.$$

By \eqref{eq alpha(i)ij=-alpha(j)ij} it is clear that $\Delta (p_k) = [z_0 ,p_k]$, for every $k=1,\ldots,n$. The mapping $\widehat{\Delta} = \Delta -[z_0,.]$ is a weak-2-local derivation, satisfying $$\widehat{\Delta}\left(\sum_{k=1}^n\lambda_kp_k\right)=0,$$ for every $\lambda_1,\ldots,\lambda_n\in \mathbb{C}$ (cf. \cite[Lemma 2.1(a), Proposition 3.4]{NiPe}).\smallskip

Let us fix $2\leq j_0 \leq n$. Combining \eqref{eq [z,eij]}, for $[z, e_{1j_0}]$, with \eqref{eq [z,pk]}, for $[z,p_1],$ and $[z,p_{j_0}],$  and the fact that $\widehat{\Delta}$ is a weak-2-local derivation, we can assert, after an appropriate choosing of functionals $\phi\in M_n^*,$ that there exists $\gamma_{j_0}\in \mathbb{C}$ satisfying $$\widehat{\Delta} (e_{1j_0}) = \gamma_{j_0} e_{1j_0}, \ \ (2\leq j_0 \leq n).$$
If we set $$z_1 := -\sum_{j=2}^n\gamma_j p_j,$$ then $\widehat{\Delta} (e_{1j_0})= [z_1,e_{1j_0}],$ for every $2\leq j_0 \leq n,$ and we further know that $\displaystyle \left[z_1,\sum_{k=1}^n\lambda_kp_k \right]=0,$ for every $\lambda_k\in \mathbb{C}.$ The desired statement is obtained by setting $w_0=z_0+z_1.$
\end{proof}

\begin{proposition}\label{p weak-2-local symmetric derivations on Mn are derivations} Every {\rm(}non-necessarily linear{\rm)} symmetric {\rm(}i.e. $\Delta^\sharp=\Delta${\rm)} weak-2-local derivation on $M_n$ is linear and a derivation.
\end{proposition}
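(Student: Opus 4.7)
The plan is to induct on $n$. The base case $n=1$ is immediate since every derivation on $M_1\cong\mathbb{C}$ vanishes, forcing $\Delta\equiv 0$ via the weak-2-local condition. For $n\geq 2$, assume the statement for $M_{n-1}$ and fix a symmetric weak-2-local derivation $\Delta$ on $M_n$. I would first apply Proposition~\ref{p Delta(pk,e1j,ej1)} to obtain $w_0=z_0+z_1$, where $z_0$ is off-diagonal (as in the proof of that proposition) and $z_1=-\sum_{j=2}^n\gamma_j p_j$ is diagonal, satisfying $\Delta(p_k)=[w_0,p_k]$ and $\Delta(e_{1j})=[w_0,e_{1j}]$; then verify that $\widehat{\Delta}:=\Delta-[w_0,\cdot]$ meets the hypotheses of Lemma~\ref{l Delta=0}, which would give $\widehat{\Delta}\equiv 0$ and therefore $\Delta=[w_0,\cdot]$, a linear inner derivation.

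The principal obstacle is to show that $w_0$ can be taken skew-adjoint, so that $[w_0,\cdot]$ is a $^*$-derivation and $\widehat{\Delta}$ inherits the symmetry of $\Delta$. For $z_0$ this is straightforward: the symmetry of $\Delta$ forces each $\Delta(p_k)$ to be self-adjoint, and combining \eqref{eq Delta pk} with the relation $\alpha_{ij}^{(i)}=-\alpha_{ij}^{(j)}$ of \eqref{eq alpha(i)ij=-alpha(j)ij} yields $\alpha_{ab}^{(a)}=\overline{\alpha_{ba}^{(a)}}$, hence $z_0^*=-z_0$. The subtler point is the diagonal part $z_1$. I would apply the weak-2-local property at the pair $\{e_{1j},e_{j1}\}$ with the functional $\phi=\phi_{1j}+\phi_{j1}$: identity \eqref{eq [z,eij]} gives $\phi[w,e_{1j}]=w_{11}-w_{jj}=-\phi[w,e_{j1}]$ for every $w\in M_n$, while Proposition~\ref{p Delta(pk,e1j,ej1)} together with symmetry ($\Delta(e_{j1})=\Delta(e_{1j})^*$) yields $\phi\Delta(e_{1j})=\gamma_j$ and $\phi\Delta(e_{j1})=\overline{\gamma_j}$. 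The resulting relation $\overline{\gamma_j}=-\gamma_j$ forces each $\gamma_j$ to be purely imaginary, so $z_1^*=-z_1$.

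It would then remain to verify $\widehat{\Delta}(qaq)=0$ for every $a\in M_n$. The natural move is to compress $\widehat{\Delta}$ to $qM_nq\cong M_{n-1}$ via $\widetilde{\Delta}(x):=q\widehat{\Delta}(x)q$; the identity $q[w,x]q=[qwq,x]$ for $x\in qM_nq$, combined with innerness of derivations of $M_n$, shows $\widetilde{\Delta}$ is a symmetric weak-2-local derivation on $M_{n-1}$, to which the inductive hypothesis applies. The resulting linear $^*$-derivation is inner and vanishes on $p_1,\dots,p_{n-1}$ (forcing its implementing element to be diagonal) and on $e_{12},\dots,e_{1,n-1}$ (forcing the diagonal to be constant), so it lies in $\mathbb{C}q$ and $\widetilde{\Delta}\equiv 0$, i.e.\ $q\widehat{\Delta}(qaq)q=0$. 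Lemma~\ref{l q Delta(qaq)pn} then supplies $q\widehat{\Delta}(qaq)p_n=p_n\widehat{\Delta}(qaq)q=0$, and weak-2-local at $\phi_{nn}$ (using that $[w,qaq]$ has zero $(n,n)$-entry for every $w\in M_n$) gives $p_n\widehat{\Delta}(qaq)p_n=0$. Assembling the four blocks completes $\widehat{\Delta}(qaq)=0$, and since $\widehat{\Delta}(e_{1n})=0$ by construction, Lemma~\ref{l Delta=0} finishes the inductive step.
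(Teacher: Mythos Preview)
Your proof is correct and follows the same inductive skeleton as the paper's: reduce via Proposition~\ref{p Delta(pk,e1j,ej1)}, apply the induction hypothesis to the compression $q\widehat{\Delta}q|_{qM_nq}$, then invoke Lemmas~\ref{l q Delta(qaq)pn} and~\ref{l Delta=0}. Two points distinguish your argument from the paper's. First, you explicitly verify that $w_0=z_0+z_1$ can be chosen skew-adjoint, so that $\widehat{\Delta}=\Delta-[w_0,\cdot]$ remains symmetric; the paper simply writes ``we can assume'' the reduced map satisfies \eqref{eq Delta(pk,e1j,ej1)=0} and then continues to use $\Delta^\sharp=\Delta$, tacitly relying on exactly this fact. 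Your computation of $z_0^*=-z_0$ from the self-adjointness of each $\Delta(p_k)$, and your weak-2-local argument at $\phi=\phi_{1j}+\phi_{j1}$ forcing $\overline{\gamma_j}=-\gamma_j$, are both sound. Second, once the induction hypothesis gives that $q\widehat{\Delta}q|_{qM_nq}$ is a linear derivation on $M_{n-1}$, you observe directly that its implementing element must be diagonal (from vanishing on the $p_k$'s) with constant diagonal (from vanishing on the $e_{1j}$'s), hence central, so $q\widehat{\Delta}(qaq)q=0$ immediately. The paper reaches the same endpoint by the longer route through Lemma~\ref{l Delta(sum e1j+ej1)} and the intermediate identity \eqref{eq 0420 b}, only obtaining $q\Delta(qaq)q=0$ after first showing $\Delta(e_{ij})=0$ for all $i,j\le n-1$. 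Your shortcut is legitimate and buys a cleaner argument; the paper's route is more hands-on but self-contained at the level of matrix computations.
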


\begin{proof} We shall proceed by induction on $n$. The statement for $n=1$ is clear, while the case $n=2$ is a direct consequence of \cite[Theorem 3.2]{NiPe}. We may, therefore, assume that $n\geq 3.$ Suppose that the desired conclusion is true for $n-1.$ \smallskip

Let $p_1, \ldots, p_n$ be mutually orthogonal minimal projections in $M_n$, and let $e_{ij}$ denote the unique minimal partial isometry in $M_n$ satisfying $e_{ij}^* e_{ij} = p_j$ and $e_{ij} e_{ij}^* = p_i.$. By Proposition \ref{p Delta(pk,e1j,ej1)}, we can assume that
\begin{equation}\label{eq Delta(pk,e1j,ej1)=0}
  \Delta\left(\sum_{k=1}^n\lambda_kp_k\right) = 0 = \Delta(e_{1j}),
\end{equation}
for every $\lambda_k\in\mathbb{C},$ and $2\leq j\leq n.$ Since $\Delta^\sharp=\Delta,$ we obtain
\begin{equation}\label{eq 0314 3}
  \Delta(e_{j1}) = \Delta(e_{1j})^* = 0,\ \ (1\leq j\leq n).
\end{equation}

Let $q=1-p_n.$ Proposition 2.7 in \cite{NiPe} shows that $$q\Delta q:qM_nq\rightarrow qM_nq$$ is a weak-2-local derivation. Since $qM_nq\equiv M_{n-1},$ we deduce, from the induction hypothesis, that $q\Delta q$ is linear on $qM_nq.$ From \eqref{eq 0314 3} it may be concluded that
\begin{equation}\label{eq 0315 1}
  q\Delta\left(\sum_{j=1}^{n-1} \lambda_{1j}e_{1j}+\lambda_{j1}e_{j1}\right)q=0,
\end{equation}
for every $\lambda_{1j},\lambda_{j1}\in\mathbb{C}.$

Combining \eqref{eq Delta(pk,e1j,ej1)=0} with Lemma \ref{l q Delta(qaq)pn}, together with \eqref{eq 0315 1} and \cite[Lemma 3.1]{NiPe}, we prove that
\begin{equation}\label{eq 0420 b} \Delta\left(\sum_{j=1}^{n-1} \lambda_{1j}e_{1j}+\lambda_{j1}e_{j1}\right)=0,\ \ (\lambda_{1j},\lambda_{j1}\in\mathbb{C}).
\end{equation}

Lemma \ref{l Delta(sum e1j+ej1)} now shows, via \eqref{eq 0420 b} and \eqref{eq Delta(pk,e1j,ej1)=0}, that
$$\Delta(e_{ij})=0,\ \ (1\leq i,j\leq n-1).$$ Therefore, for every $a=(a_{ij})\in M_n,$ we have
$$q\Delta(qaq)q = q\Delta\left(\sum_{i,j=1}^{n-1} a_{ij}e_{ij}\right)q = \sum_{i,j=1}^{n-1} a_{ij}q\Delta(e_{ij})q=0.$$
A new application of Lemma \ref{l q Delta(qaq)pn} and \cite[Lemma 3.1]{NiPe} yields $\Delta(qaq)=0,$ for every $a\in M_n.$ Finally, the identity in \eqref{eq Delta(pk,e1j,ej1)=0} can be applied with Lemma \ref{l Delta=0} to conclude the proof.
\end{proof}

Let us mention an important consequence of the above proposition. It is clear that every weak-2-local $^*$-derivation on $M_n$ is a symmetric weak-2-local derivation. However, it is not clear whether the reciprocal implication is, in general true (cf. \cite[comments before Lemma 2.1]{NiPe}). Proposition \ref{p weak-2-local symmetric derivations on Mn are derivations} proves a stronger result by showing that symmetric weak-2-local derivations on $M_n$ are linear $^*$-derivations.\smallskip

The main result of this section is a direct consequence of the above Proposition \ref{p weak-2-local symmetric derivations on Mn are derivations} and \cite[Lemma 2.1]{NiPe}.

\begin{theorem}\label{t weak-2-local derivations on Mn are derivations} Every {\rm(}non-necessarily linear{\rm)} weak-2-local derivation on $M_n$ is a linear derivation.$\hfill\Box$
\end{theorem}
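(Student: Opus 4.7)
The plan is to reduce to the symmetric case already treated by Proposition~\ref{p weak-2-local symmetric derivations on Mn are derivations}. Given an arbitrary weak-2-local derivation $\Delta:M_n\to M_n$, introduce the companion map $\Delta^\sharp(a)=\Delta(a^*)^*$. The first task is to verify that $\Delta^\sharp$ is itself a weak-2-local derivation: for a triple $(a,b,\phi)$, consider the functional $\phi'(x)=\overline{\phi(x^*)}$, apply the weak-2-local property of $\Delta$ at $(a^*,b^*,\phi')$ to obtain a derivation $D$, and observe that $D^\sharp$ is again a derivation on $M_n$ satisfying $\phi\Delta^\sharp(a)=\phi D^\sharp(a)$ and $\phi\Delta^\sharp(b)=\phi D^\sharp(b)$.

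Now set $\Delta_1=\tfrac{1}{2}(\Delta+\Delta^\sharp)$ and $\Delta_2=\tfrac{1}{2i}(\Delta-\Delta^\sharp)$, so that $\Delta_j^\sharp=\Delta_j$ and $\Delta=\Delta_1+i\Delta_2$. I expect \cite[Lemma~2.1]{NiPe} to provide precisely the missing ingredient, namely that $\Delta_1$ and $\Delta_2$ are again weak-2-local derivations. The argument should go as follows: given $(a,b,\phi)$, pick a derivation $D$ witnessing $\Delta$ at $(a,b,\phi)$ and a derivation $D'$ witnessing $\Delta^\sharp$ at the same triple; since the space of derivations on $M_n$ is closed under complex linear combinations, $\tfrac{1}{2}(D+D')$ and $\tfrac{1}{2i}(D-D')$ are themselves derivations, and they witness the weak-2-local behavior of $\Delta_1$ and $\Delta_2$ respectively at $(a,b,\phi)$.

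Having obtained two symmetric weak-2-local derivations, apply Proposition~\ref{p weak-2-local symmetric derivations on Mn are derivations} to each to conclude that $\Delta_1$ and $\Delta_2$ are both linear $^*$-derivations. Since linear combinations of linear derivations are again linear derivations, $\Delta=\Delta_1+i\Delta_2$ is a linear derivation, and the theorem follows. The only delicate point is the combination step — merging two weak-2-local witnesses into a single witness for the averaged map — but the obstruction evaporates because the class of derivations on $M_n$ forms a linear space, so averaging preserves it; this is the content appealed to from \cite[Lemma~2.1]{NiPe}.
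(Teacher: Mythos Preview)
Your proposal is correct and follows essentially the same route as the paper: the paper's proof is simply the one-line remark that the theorem is a direct consequence of Proposition~\ref{p weak-2-local symmetric derivations on Mn are derivations} together with \cite[Lemma~2.1]{NiPe}, and what you have written is precisely an unpacking of that reduction---decompose $\Delta=\Delta_1+i\Delta_2$ into symmetric parts via $\Delta^\sharp$, invoke \cite[Lemma~2.1]{NiPe} to see each $\Delta_j$ is again a weak-2-local derivation, and apply the symmetric case to each.
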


The same arguments applied in the proof of \cite[Corollary 3.12]{NiPe} remain valid to deduce the next corollary from Theorem \ref{t weak-2-local derivations on Mn are derivations} above.

\begin{corollary}\label{c weak-2-local derivations on finite dimensional C-algebras are derivations} Every {\rm(}non-necessarily linear nor continuous{\rm)} weak-2-local derivation on a finite dimensional C$^*$-algebra is a linear derivation.$\hfill\Box$
\end{corollary}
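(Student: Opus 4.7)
The plan is to reduce to the matrix algebra case via the structure theorem for finite-dimensional C$^*$-algebras. Write $A = \bigoplus_{k=1}^m M_{n_k}$ and let $q_1,\ldots,q_m$ denote the central projections corresponding to the simple summands, so that $q_k A q_k \cong M_{n_k}$. By Proposition 2.7 of \cite{NiPe}, each compression $\Delta_k(x) := q_k \Delta(x) q_k$ defines a weak-2-local derivation on $q_k A q_k$, and hence Theorem \ref{t weak-2-local derivations on Mn are derivations} gives that $\Delta_k$ is a linear derivation on $M_{n_k}$.

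The key algebraic observation I would use is that every derivation $D : A \to A$ annihilates the central projections $q_k$: from $D(q_k) = D(q_k^2) = D(q_k)q_k + q_k D(q_k) = 2 q_k D(q_k)$ (using centrality of $q_k$), left-multiplication by $q_k$ gives $q_k D(q_k) = 0$, whence $D(q_k) = 0$. Consequently $D(q_k a q_k) = q_k D(a) q_k$ for every $a \in A$. Now, given $a \in A$ and a functional $\phi \in A^*$ of the form $\phi = q_k \phi q_k$, the weak-2-local property applied at the pair $(a, q_k a q_k)$ produces a derivation $D$ satisfying
$$\phi \Delta(a) = \phi D(a) = \phi\bigl(q_k D(a) q_k\bigr) = \phi D(q_k a q_k) = \phi \Delta(q_k a q_k).$$
Since functionals of this form separate the points of $q_k A q_k$, I obtain $q_k \Delta(a) q_k = q_k \Delta(q_k a q_k) q_k = \Delta_k(q_k a q_k)$. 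The orthogonality of the central projections forces $q_j \Delta(a) q_k = 0$ for $j \ne k$, so $\Delta(a) = \sum_{k=1}^m \Delta_k(q_k a q_k)$.

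Linearity of $\Delta$ follows at once from this decomposition, since each $\Delta_k$ and each cut-down $a \mapsto q_k a q_k$ is linear; the Leibniz rule follows from $q_k(ab)q_k = (q_k a q_k)(q_k b q_k)$ together with the derivation property of each $\Delta_k$ and the orthogonality $q_j q_k = 0$ for $j\neq k$. The main difficulty of the corollary is therefore entirely absorbed by Theorem \ref{t weak-2-local derivations on Mn are derivations}; the only step in the reduction requiring care is the simultaneous invocation of the weak-2-local property at both $a$ and $q_k a q_k$, which is precisely what enables the transfer of information from the global map $\Delta$ to its compressions $\Delta_k$.
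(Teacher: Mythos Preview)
Your proof is correct and follows essentially the same reduction the paper intends: the paper does not spell out an argument here but simply says that the same reasoning as in \cite[Corollary 3.12]{NiPe} applies once Theorem \ref{t weak-2-local derivations on Mn are derivations} is available, and that reasoning is precisely the central decomposition $A=\bigoplus_k M_{n_k}$ you carry out. Your justification that $D(q_k)=0$ for any derivation $D$ (using that $q_k$, being central, commutes with $D(q_k)$) and the subsequent transfer $q_k\Delta(a)q_k=q_k\Delta(q_k a q_k)q_k$ via the weak-2-local property at the pair $(a,q_k a q_k)$ are exactly the ingredients needed; the vanishing of the off-diagonal blocks $q_j\Delta(a)q_k$ is automatic since $q_jAq_k=0$.
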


\section{Weak-2-local derivations on $B(H)$}\label{Sec: weak-2-local derivations on B(H)}

Throughout this section, given a complex Hilbert space $H$, the symbols $B(H)$, $K(H)$ and $\mathcal{F} (H)$ will denote the spaces of all bounded, compact and finite-rank operators on $H$.

\begin{proposition}\label{p linearity on finite-ranks}
Let $\Delta: K(H)\to K(H)$ be a weak-2-local derivation, where $H$ is a complex Hilbert space. Then $$\Delta(a+b)=\Delta(a)+\Delta(b),$$
for every $a,b\in \mathcal{F}(H).$
\end{proposition}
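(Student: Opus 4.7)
The plan is to reduce additivity on finite-rank operators to the finite-dimensional theory already developed in Section~\ref{Sec: matrix algebras}. The idea is to compress $\Delta$ to larger and larger finite-dimensional corners of $K(H)$, invoke Corollary~\ref{c weak-2-local derivations on finite dimensional C-algebras are derivations} in each corner, and then pass to a limit using the compactness of $\Delta(a)$, $\Delta(b)$ and $\Delta(a+b)$.

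First I would fix $a,b\in\mathcal{F}(H)$ and pick a finite-rank projection $p\in B(H)$ with $pap=a$ and $pbp=b$ (for instance, the projection onto the finite-dimensional subspace generated by the ranges of $a$, $a^{*}$, $b$ and $b^{*}$), so that $a$, $b$ and $a+b$ all lie in $pK(H)p$. For an arbitrary finite-rank projection $\widetilde{p}\geq p$, the corner $\widetilde{p}K(H)\widetilde{p}$ is a copy of $M_{n}$ with $n=\mathrm{rank}(\widetilde{p})$, and contains $a$, $b$ and $a+b$.

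The key step is to verify that the compression
$$\Delta_{\widetilde{p}}:\widetilde{p}K(H)\widetilde{p}\longrightarrow \widetilde{p}K(H)\widetilde{p},\qquad \Delta_{\widetilde{p}}(x):=\widetilde{p}\,\Delta(x)\,\widetilde{p},$$
is itself a weak-2-local derivation, as in \cite[Proposition 2.7]{NiPe}. Given $x,y\in\widetilde{p}K(H)\widetilde{p}$ and $\psi\in(\widetilde{p}K(H)\widetilde{p})^{*}$, one extends $\psi$ to a functional $\widetilde{\psi}\in K(H)^{*}$ by $\widetilde{\psi}(c):=\psi(\widetilde{p}c\widetilde{p})$, applies the weak-2-local property of $\Delta$ to obtain a derivation $D$ on $K(H)$ satisfying $\widetilde{\psi}\Delta(x)=\widetilde{\psi}D(x)$ and $\widetilde{\psi}\Delta(y)=\widetilde{\psi}D(y)$, and then checks (using $\widetilde{p}z=z\widetilde{p}=z$ for $z\in\widetilde{p}K(H)\widetilde{p}$) that $z\mapsto\widetilde{p}D(z)\widetilde{p}$ is a derivation on $\widetilde{p}K(H)\widetilde{p}$ that implements $\Delta_{\widetilde{p}}$ at the pair $(x,y)$ against $\psi$. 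Since $\widetilde{p}K(H)\widetilde{p}$ is finite dimensional, Corollary~\ref{c weak-2-local derivations on finite dimensional C-algebras are derivations} yields that $\Delta_{\widetilde{p}}$ is a linear derivation, and hence
$$\widetilde{p}\,\Delta(a+b)\,\widetilde{p}=\widetilde{p}\,\Delta(a)\,\widetilde{p}+\widetilde{p}\,\Delta(b)\,\widetilde{p}.$$

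To conclude, I would let $\widetilde{p}$ run through an increasing net of finite-rank projections above $p$ that converges to the identity in the strong operator topology. The standard fact that finite-rank truncations converge in operator norm on compact operators gives $\|\widetilde{p}\,\Delta(x)\,\widetilde{p}-\Delta(x)\|\to 0$ for each $x\in\{a,b,a+b\}$, and passing to the limit in the displayed identity yields $\Delta(a+b)=\Delta(a)+\Delta(b)$. The only genuinely delicate point is the verification that the compression $\Delta_{\widetilde{p}}$ is again a weak-2-local derivation; after that, Section~\ref{Sec: matrix algebras} and a routine compactness argument finish the proof.
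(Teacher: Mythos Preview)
Your proposal is correct and follows essentially the same route as the paper: compress to a finite-rank corner, invoke \cite[Proposition 2.7]{NiPe} to see that the compression is weak-2-local, apply the finite-dimensional result of Section~\ref{Sec: matrix algebras}, and then use compactness of $\Delta(a)$, $\Delta(b)$, $\Delta(a+b)$ to pass to the limit. The only cosmetic difference is that the paper fixes $\varepsilon>0$ and chooses a single large enough projection $p$ for which both $a=pap$, $b=pbp$ and $\|p\Delta(x)p-\Delta(x)\|<2\varepsilon$, whereas you phrase the same approximation via an increasing net $\widetilde{p}\geq p$ converging strongly to the identity.
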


\begin{proof} Since $\Delta(a), \Delta(b),$ and $\Delta(a+b)$ are compact operators, for any $\varepsilon>0,$ there exists a finite-rank projection $p,$ such that \begin{equation}\label{eq 0316 1}
    \|\xi -p\xi\|<\varepsilon,\ \ \|\xi-\xi p\|<\varepsilon,
  \end{equation} for every $\xi\in \{\Delta(a), \Delta(b),\Delta(a+b)\}$.\smallskip

Since $a,b$ are finite-rank operators we can also assume that the above projection $p$ also satisfies
\begin{equation}\label{eq 0317 3}
    a=pap\ \ \hbox{and}\ \ b=pbp. \end{equation}

By \cite[Proposition 2.7]{NiPe}, the restriction $$p\Delta p|_{pK(H)p}: pK(H)p\rightarrow pK(H)p$$
is a weak-2-local derivation. We observe that $p K(H)p$ is finite dimensional, thus Theorem \ref{t weak-2-local derivations on Mn are derivations} shows that $p\Delta p|_{pK(H)p}$ is linear. Therefore
\begin{equation}\label{eq 0322} p\Delta(a + b)p=\hbox{ (by  \eqref{eq 0317 3})} = p\Delta(pap + pbp)p
\end{equation} $$= p\Delta(pap)p + p\Delta(pbp)p=\hbox{ (by  \eqref{eq 0317 3})}= p\Delta(a)p + p\Delta(b)p.$$

The inequalities in \eqref{eq 0316 1} assure that $$\|p\Delta(a)p - \Delta(a) \|, \|p\Delta(b)p - \Delta(b) \|, \|p\Delta(a+b)p - \Delta(a+b) \|< 2 \varepsilon,$$ and hence, by \eqref{eq 0322}, we have $$\|\Delta(a+b)-\Delta(a)-\Delta(b)\|\leq  \|p(\Delta(a+b)-\Delta(a)-\Delta(b))p\|$$
  $$+\|(\Delta(a+b)-\Delta(a)-\Delta(b)) - p(\Delta(a+b)-\Delta(a)-\Delta(b))p\| < 6 \varepsilon.$$
The arbitrariness of $\varepsilon>0$ implies that $\Delta(a+b)=\Delta(a)+\Delta(b).$
\end{proof}

Let $D: A\to A$ be a derivation on a C$^*$-algebra. It is known that, $D$ is a continuous operator (cf. \cite[Theorem]{Sak60} or \cite[Theorem 2]{Ringrose72} or \cite[Corollary, page 27]{Kishi76}). We further know that $D^{**} : A^{**}\to A^{**}$ is a (continuous) derivation (cf. \cite[Lemma 3]{Kad66} or \cite[Remark 2.6]{AyuKudPe2014}). 
Therefore, given a projection $p\in A$ and an element $b\in A$ with $p b = b p=0,$ we have \begin{equation}\label{eq pD(b)p=0} pD(b)p = p D((1-p)b) p  = p D^{**}(1-p) b p + p (1-p) D(b) p = 0.
\end{equation}

\begin{lemma}\label{l pDelta(pap+b)p}
Let $\Delta: A\to A$ be a weak-2-local derivation on a C$^*$-algebra. Suppose $p$ is a projection in $A,$ and $b$ is an element in $A$ satisfying $pb=bp=0.$ Then $$p\Delta(a +b)p=p\Delta(a )p,$$ for every $a\in A$. In particular, the identity $$p\Delta(a )p=p\Delta(a - (1-p) a (1-p) )p,$$ holds for every $a\in A$.
\end{lemma}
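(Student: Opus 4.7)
The plan is to test the desired identity against functionals of a special form and then invoke the weak-2-local property together with identity \eqref{eq pD(b)p=0}.

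To prove $p\Delta(a+b)p = p\Delta(a)p$, it suffices to show that $\phi(p\Delta(a+b)p) = \phi(p\Delta(a)p)$ for every $\phi \in A^*$. Fix such a $\phi$ and consider the functional $\psi \in A^*$ defined by $\psi(x) := \phi(pxp)$. By the weak-2-local property of $\Delta$ applied at the functional $\psi$ and at the pair of points $a+b$ and $a$, there is a (linear, continuous) derivation $D = D_{a+b,a,\psi} : A \to A$ satisfying
\begin{equation*}
\psi(\Delta(a+b)) = \psi(D(a+b)) \quad \text{and} \quad \psi(\Delta(a)) = \psi(D(a)).
\end{equation*}

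Subtracting and using the linearity of $D$, we get
\begin{equation*}
\psi(\Delta(a+b)) - \psi(\Delta(a)) = \psi(D(a+b) - D(a)) = \psi(D(b)) = \phi(pD(b)p).
\end{equation*}
The assumption $pb = bp = 0$ together with identity \eqref{eq pD(b)p=0} (applied to the derivation $D$) yields $pD(b)p = 0$, whence $\psi(D(b)) = 0$. Therefore $\phi(p\Delta(a+b)p) = \phi(p\Delta(a)p)$, and since $\phi \in A^*$ was arbitrary, the Hahn-Banach theorem gives the first identity.

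For the ``in particular'' statement, set $b := (1-p) a (1-p)$ and $c := a - (1-p) a (1-p)$. Then $pb = bp = 0$, and $a = c + b$, so applying the first part with $a$ replaced by $c$ and the same $b$ gives $p\Delta(a)p = p\Delta(c+b)p = p\Delta(c)p = p\Delta(a - (1-p)a(1-p))p$.

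There is no real obstacle here: the whole argument hinges on the single observation that an identity of the form $p x p = p y p$ is equivalent to $\phi(pxp) = \phi(pyp)$ for all $\phi$, which allows us to absorb the ``$p\cdot p$'' sandwich into the functional and then apply the weak-2-local definition with $a+b$ and $a$ as the distinguished pair. Once that reduction is made, the conclusion is immediate from linearity of $D$ and the identity \eqref{eq pD(b)p=0} already recorded just before the lemma.
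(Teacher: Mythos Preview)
Your proof is correct and follows essentially the same approach as the paper: both absorb the projection sandwich $p\cdot p$ into the functional, apply the weak-2-local property at the pair $(a+b,a)$, and use \eqref{eq pD(b)p=0} together with linearity of the resulting derivation to kill the $D(b)$ term. The only cosmetic difference is that the paper phrases this via functionals $\phi$ satisfying $\phi = p\phi p$ and then invokes \cite[Lemma~3.5]{BenAliPeraltaRamirez} to conclude, whereas you parametrize such functionals as $\psi = \phi(p\cdot p)$ for arbitrary $\phi$ and appeal directly to Hahn--Banach, making your version slightly more self-contained.
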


\begin{proof} Let $\phi$ be a functional in $A^*$ satisfying $\phi=p\phi p.$ Since, for each derivation $D: A\to A,$ we deduce from \eqref{eq pD(b)p=0} that
$$\phi D(a +b) =\phi(pD(a+b)p)=\phi pD(a) p =\phi D(a),$$ it can be concluded from the weak-2-local property of $\Delta,$ at $\phi$ and the points $a+b$ and $a,$ that $$\phi\Delta (a+b)=\phi\Delta(a),$$ for every $a\in A$ and $\phi$ as above.  Lemma 3.5 in \cite{BenAliPeraltaRamirez} implies that $p\Delta(a +b)p=p\Delta(a )p.$
\end{proof}

In \cite[Proposition 2.7]{NiPe}, we prove that if $D:A\rightarrow A$ a derivation {\rm(}respectively, a $^*$-derivation{\rm)} on a C$^*$-algebra, and $p$ is a projection in $A$, then the operator $pDp|_{pAp}:pAp\rightarrow pAp$, $x\mapsto pD(x) p$ is a derivation {\rm(}respectively, a $^*$-derivation{\rm)} on $pAp$. Furthermore, if $\Delta: A\to A$ is a weak-2-local derivation {\rm(}respectively, a weak-2-local $^*$-derivation{\rm)} on $A$, the mapping $p\Delta p|_{pAp} : pAp \to pAp,$ $x\mapsto p \Delta (x) p$ is a weak-2-local derivation {\rm(}respectively, a weak-2-local $^*$-derivation{\rm)} on $pAp$. The next lemma is a consequence of this fact.

\begin{lemma}\label{NiPe p27 for ideal} Let $A$ be a C$^*$-subalgebra of a C$^*$-algebra $B$, and suppose that $A$ is an (two-sided) ideal of $B$.  Let $\Delta: A\to A$ be a weak-2-local derivation {\rm(}respectively, a weak-2-local $^*$-derivation{\rm)} on $A$. Then for each projection $p\in B,$ the mapping $p\Delta p|_{pAp} : pAp \to pAp,$ $x\mapsto p \Delta (x) p$ is a weak-2-local derivation {\rm(}respectively, a weak-2-local $^*$-derivation{\rm)} on $pAp$.$\hfill\Box$
\end{lemma}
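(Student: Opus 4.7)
My plan is to reduce the statement to the existing weak-2-local derivation property of $\Delta$ on $A$ by constructing, from any functional on $pAp$, an auxiliary functional on $A$, and then to show that any derivation on $A$ can be transported to a derivation on $pAp$ via compression by $p$.

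First I would fix $x,y\in pAp$ and $\phi\in(pAp)^{*}$, and define an extension $\widetilde{\phi}\in A^{*}$ by the formula $\widetilde{\phi}(a):=\phi(pap)$ for $a\in A$. Linearity and boundedness ($\|\widetilde{\phi}\|\leq\|\phi\|$) are immediate. Because $\Delta:A\to A$ is a weak-2-local derivation (respectively, weak-2-local $^{*}$-derivation), applied to the pair $(x,y)$ and the functional $\widetilde{\phi}$ there is a derivation (respectively, $^{*}$-derivation) $D=D_{x,y,\widetilde{\phi}}:A\to A$ with $\widetilde{\phi}\Delta(x)=\widetilde{\phi}D(x)$ and $\widetilde{\phi}\Delta(y)=\widetilde{\phi}D(y)$.

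Second, I would define $\widetilde{D}:pAp\to pAp$ by $\widetilde{D}(z):=pD(z)p$. Well-definedness uses that $A$ is a two-sided ideal in $B$: since $p\in B$ and $D(z)\in A$, we get $pD(z)p\in A$, and trivially this element lies in $pAp$. The key computation is the Leibniz rule on $pAp$: for $z,w\in pAp$ one has $z=pz=zp$ and $w=pw=wp$, hence
\begin{align*}
\widetilde{D}(zw) &= pD(zw)p = pD(z)wp + pzD(w)p \\
&= pD(z)(wp) + (pz)D(w)p \\
&= \bigl(pD(z)p\bigr)w + z\bigl(pD(w)p\bigr) = \widetilde{D}(z)w + z\widetilde{D}(w),
\end{align*}
where in the middle equality I used $wp=w$ to insert the extra $p$ on the right of $D(z)$, and symmetrically for the second term. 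Linearity of $\widetilde{D}$ is immediate from linearity of $D$. In the $^{*}$-derivation case, $\widetilde{D}(z^{*})=pD(z^{*})p=pD(z)^{*}p=(pD(z)p)^{*}=\widetilde{D}(z)^{*}$, so $\widetilde{D}$ inherits the $^{*}$-property.

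Finally, I would close the loop by computing
\[
\phi\bigl(p\Delta(x)p\bigr)=\widetilde{\phi}\Delta(x)=\widetilde{\phi}D(x)=\phi\bigl(pD(x)p\bigr)=\phi\bigl(\widetilde{D}(x)\bigr),
\]
and identically for $y$, which exhibits $\widetilde{D}$ as the derivation on $pAp$ witnessing the weak-2-local behavior of $p\Delta p|_{pAp}$ at the data $(x,y,\phi)$. The only delicate point is the Leibniz verification for $\widetilde{D}$, where one must genuinely exploit $z=pz=zp$ for $z\in pAp$ together with the fact that $p$ multiplies $A$ into $A$; once this identity is in hand, the rest is bookkeeping. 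Note that, in contrast to \cite[Proposition 2.7]{NiPe}, here the projection $p$ need not belong to $A$, so the ideal hypothesis is essential to guarantee that the compressions $pD(z)p$ and $p\Delta(z)p$ land back in $A$ (and hence in $pAp$).
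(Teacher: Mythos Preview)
Your proof is correct and is precisely the argument the paper has in mind: the paper states the lemma without proof (marking it with $\Box$) and simply observes that it is a consequence of \cite[Proposition 2.7]{NiPe}, whose proof proceeds exactly by extending $\phi$ to $\widetilde{\phi}(a)=\phi(pap)$ and compressing the resulting derivation $D$ to $\widetilde{D}=pDp|_{pAp}$. Your write-up correctly identifies the one new point, namely that the ideal hypothesis replaces the assumption $p\in A$ in ensuring $pAp\subseteq A$ and $pD(z)p\in pAp$.
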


\begin{lemma}\label{l ideals are fixed} Let $A$ be a (closed) C$^*$-subalgebra of a von Neumann algebra $M$, and suppose that $A$ is a two-sided ideal of $M$.  Let $\Delta: M\to M$ be a weak-2-local derivation {\rm(}respectively, a weak-2-local $^*$-derivation{\rm)} on $M$. Then $\Delta (A) \subseteq A$ and $\Delta |_{A} : A \to A$ is a weak-2-local derivation {\rm(}respectively, a weak-2-local $^*$-derivation{\rm)} on $A$.
\end{lemma}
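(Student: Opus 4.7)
My plan is to combine the bipolar theorem with the classical fact that every bounded derivation of a von Neumann algebra $M$ sends each norm-closed two-sided ideal of $M$ into itself. The latter follows, for instance, from Sakai's innerness theorem: a derivation of $M$ has the form $D(\cdot)=[m,\cdot]$ for some $m\in M$, and then $[m,A]\subseteq A$ when $A$ is two-sided.

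For the inclusion $\Delta(A)\subseteq A$, I would fix $a\in A$ and work with the annihilator $A^{\perp}=\{\phi\in M^{*}\colon \phi|_{A}=0\}$. Since $A$ is a norm-closed subspace of $M$, Hahn-Banach (bipolar) gives $A^{\perp\perp}=A$. For each $\phi\in A^{\perp}$ apply the weak-2-local property of $\Delta$ at the triple $(a,a,\phi)$ to obtain a derivation $D_{a,\phi}\colon M\to M$ with $\phi\Delta(a)=\phi D_{a,\phi}(a)$. Writing $D_{a,\phi}(\cdot)=[m,\cdot]$ via Sakai, we get $D_{a,\phi}(a)=[m,a]\in A$, hence $\phi D_{a,\phi}(a)=0$, and therefore $\phi\Delta(a)=0$. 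Arbitrariness of $\phi\in A^{\perp}$ forces $\Delta(a)\in A^{\perp\perp}=A$. The $^{*}$-derivation case is identical, since a $^{*}$-derivation of $M$ is still inner (implemented by a skew-adjoint element) and thus preserves $A$.

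To see that $\Delta|_{A}$ is itself a weak-2-local derivation (respectively, $^{*}$-derivation) on $A$, I would start from $a,b\in A$ and $\psi\in A^{*}$, and extend $\psi$ to a functional $\phi\in M^{*}$ by Hahn-Banach. The weak-2-locality of $\Delta$ on $M$ then produces a derivation (resp.\ $^{*}$-derivation) $D_{a,b,\phi}\colon M\to M$ with
\[
\phi\Delta(a)=\phi D_{a,b,\phi}(a),\qquad \phi\Delta(b)=\phi D_{a,b,\phi}(b).
\]
Because $D_{a,b,\phi}(A)\subseteq A$ by the ideal-invariance just established, the restriction $\widetilde{D}:=D_{a,b,\phi}|_{A}\colon A\to A$ is a derivation on $A$ (and a $^{*}$-derivation in the $^{*}$-case, since $D(x^{*})=D(x)^{*}$ is trivially inherited by the restriction). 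Since both $\Delta(a),\Delta(b)$ (by the first part) and $\widetilde{D}(a),\widetilde{D}(b)$ lie in $A$, and $\phi$ agrees with $\psi$ on $A$, the two displayed identities descend to $\psi\Delta(a)=\psi\widetilde{D}(a)$ and $\psi\Delta(b)=\psi\widetilde{D}(b)$, which is exactly the weak-2-local condition for $\Delta|_{A}$.

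The only delicate point is the invariance of norm-closed two-sided ideals under derivations of $M$; the cleanest route is Sakai's innerness theorem, though one could alternatively invoke Cohen factorization in the C$^{*}$-ideal $A$ to reach the same conclusion without using the von Neumann structure of $M$. It is worth emphasizing that $A$ is not assumed to be weak$^{*}$-closed and that the bipolar identity is taken inside $M$ with respect to the norm topology; this is essential for the intended application $A=K(H)\subseteq M=B(H)$.
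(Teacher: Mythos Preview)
Your proof is correct and follows essentially the same route as the paper: Sakai's innerness theorem gives $D(A)\subseteq A$ for every derivation $D$ on $M$, and then the bipolar theorem applied to $A^{\perp}\subseteq M^{*}$ forces $\Delta(A)\subseteq A$. The paper stops there, leaving the weak-2-local property of $\Delta|_{A}$ implicit, whereas you spell it out via a Hahn--Banach extension of $\psi\in A^{*}$ to $\phi\in M^{*}$; this extra paragraph is correct and is exactly the argument one would supply.
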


\begin{proof} By Sakai's theorem \cite{Sak66} every derivation on $M$ is inner, i.e. for each derivation $D: M \to M$ there exists $w\in M$ satisfying $D(a) =[w,a]$, for every $a\in M$. Consequently, $D(A) = [w,A]\subseteq A$ and $D|_{A} : A \to A$ is a derivation on $A$.\smallskip

Let $\phi\in A^{\circ}$, where $A^{\circ}:= \{\phi \in M^{*}: \phi |_{A} =0\}$ is the polar of $A$ in $M$. For each element $a$ in $A$, and each $\phi\in A^{\circ}$ there exists $w\in M$ satisfying $\phi \Delta (a) = \phi [w,a] =0,$ because $A$ being an ideal implies that $[w,a]\in A$, and $\phi\in A^{\circ}$. We have shown that $\Delta (A) \subseteq \left(A^{\circ}\right)_{\circ} = A$ by the bipolar theorem.
\end{proof}

Our next results determine the behavior of a weak-2-local derivation on a C$^*$-subalgebra generated by a single hermitian compact operator.

\begin{proposition}\label{p linearity on commutative elements} Let $\Delta: K(H)\to K(H)$ be a weak-2-local derivation, where $H$ is a complex Hilbert space. Let $(p_n)$ be a sequence of mutually orthogonal minimal projections in $K(H)$, and let $\mathcal{B}$ denote the commutative C$^*$-subalgebra of $K(H)$ generated by the $p_n$'s. Then $$\Delta(a+b)=\Delta(a)+\Delta(b),$$ for every $a,b\in \mathcal{B}.$
\end{proposition}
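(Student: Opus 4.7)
My plan is to follow the strategy of Proposition~\ref{p linearity on finite-ranks}, but replacing the single finite-rank compressing projection used there by a finite-rank projection $q$ that commutes with every element of $\mathcal{B}$, so that Lemma~\ref{l pDelta(pap+b)p} applies to reduce $q\Delta(a)q$ to $q\Delta(qaq)q$ even when $a\in\mathcal{B}$ is not finite-rank. Let $P\in B(H)$ denote the SOT-limit of the partial sums $P_F=\sum_{n\in F}p_n$ over finite $F\subseteq\mathbb{N}$, so that $a=PaP$ for every $a\in\mathcal{B}$. I would choose $q$ of the form $q=P_F+R$ with $F$ finite and $R$ a finite-rank projection satisfying $R\leq 1-P$; any such $q$ is finite-rank, lies in the commutant of $\mathcal{B}$ (because $P_F$ is a sum of a subset of the mutually orthogonal $p_n$'s and because $R\leq 1-P$ forces $Rp_n=p_nR=0$), and $qK(H)q$ is finite-dimensional.

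For such a $q$, the commutativity with $a$ yields $a=qaq+(1-q)a(1-q)$ with $q\cdot(1-q)a(1-q)=0=(1-q)a(1-q)\cdot q$; Lemma~\ref{l pDelta(pap+b)p} then gives $q\Delta(a)q=q\Delta(qaq)q$, and analogously for $b$ and $a+b$. By Lemma~\ref{NiPe p27 for ideal} the compression $q\Delta q|_{qK(H)q}$ is a weak-2-local derivation on the finite-dimensional algebra $qK(H)q$, hence a linear derivation by Theorem~\ref{t weak-2-local derivations on Mn are derivations}. Applying linearity to $q(a+b)q=qaq+qbq\in qK(H)q$ produces
\[
q\Delta(a+b)q=q\Delta(qaq+qbq)q=q\Delta(qaq)q+q\Delta(qbq)q=q\Delta(a)q+q\Delta(b)q.
\]

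To pass to the limit, given $\varepsilon>0$ I would use compactness of $\Delta(a),\Delta(b),\Delta(a+b)$ to choose a finite-rank projection $Q\in B(H)$ with $\|(1-Q)\Delta(\xi)\|$ and $\|\Delta(\xi)(1-Q)\|$ both less than $\varepsilon$ for $\xi\in\{a,b,a+b\}$, and then build $q=P_F+R$ approximating $Q$. Since $QH$ is finite-dimensional, the SOT-convergence $P_F\to P$ is uniform on the unit ball of $QH$, so $F$ may be chosen finite with $\|(P-P_F)v\|<\varepsilon$ for $v$ in that ball, and $R$ may be taken as the projection onto the finite-dimensional subspace $(1-P)(QH)\subseteq (1-P)H$. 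A direct verification yields $\|(1-q)Q\|,\|Q(1-q)\|<\varepsilon$, whence $\|q\Delta(\xi)q-\Delta(\xi)\|\leq C\varepsilon$ with $C$ depending only on the norms of the $\Delta(\xi)$. Combined with the identity above this gives $\|\Delta(a+b)-\Delta(a)-\Delta(b)\|\leq 3C\varepsilon$, and letting $\varepsilon\to 0$ completes the proof. I expect the main technical obstacle to be precisely this construction: commuting with $\mathcal{B}$ forces $q$ to respect the block decomposition $H=PH\oplus(1-P)H$, so the approximating projection cannot be chosen arbitrarily but must be assembled from a partial sum of the $p_n$'s together with an auxiliary finite-rank projection on $(1-P)H$ that absorbs the compact values of $\Delta$ on the orthogonal complement.
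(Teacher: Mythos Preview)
Your proposal is correct and follows essentially the same route as the paper's proof: both arguments choose a finite-rank projection commuting with $a$ and $b$ so that Lemma~\ref{l pDelta(pap+b)p} reduces $q\Delta(\xi)q$ to $q\Delta(q\xi q)q$, and then invoke Theorem~\ref{t weak-2-local derivations on Mn are derivations} on the finite-dimensional corner $qK(H)q$. The only difference is that the paper simply asserts the existence of a finite-rank projection $p$ with $a=pap+p^{\perp}ap^{\perp}$, $b=pbp+p^{\perp}bp^{\perp}$ and $\|\xi-p\xi\|,\|\xi-\xi p\|<\varepsilon$ for $\xi\in\{\Delta(a),\Delta(b),\Delta(a+b)\}$, whereas you construct it explicitly as $q=P_F+R$; your more detailed construction is a welcome clarification of precisely that step.
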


\begin{proof} Let us recall that $a,b\in \mathcal{B}$ implies that $a$ and $b$ write in the form $\displaystyle a= \sum_{n=1}^{\infty} \lambda_n p_n$ and $\displaystyle b= \sum_{n=1}^{\infty} \mu_n p_n$, where $(\lambda_n), (\mu_n)\in c_0$. We deduce from these spectral resolutions and the fact that $\Delta(a+b)$, $\Delta(a)$, and $\Delta(b)$ are in $K(H),$ that for each $\varepsilon>0,$ there exists a finite-rank projection $p$ in $K(H)$ satisfying
  \begin{equation}\label{eq 0316 3old}
    \|\xi -p\xi\|<\varepsilon,\ \ \|\xi-\xi p\|<\varepsilon,
  \end{equation} for every $\xi\in \{\Delta(a), \Delta(b),\Delta(a+b)\}$ and
  $$a=pap+p^\perp ap^\perp\ \ \hbox{and}\ \ b=pbp+p^\perp bp^\perp.$$

By Lemma \ref{l pDelta(pap+b)p}, we have
  \begin{equation}\label{eq 0317 1}
    p\Delta(a)p =p\Delta(pap+p^\perp ap^\perp)p = p\Delta(pap)p,
  \end{equation}
  and similarly
  \begin{equation}\label{eq 0317 2}
    p\Delta(b)p = p\Delta(pbp)p,\ \hbox{and}\ p\Delta(a+b)p = p\Delta(p(a+b)p)p.
  \end{equation}

Since, by \cite[Proposition 2.7]{NiPe}, the mapping $$p\Delta p|_{pK(H)p}: pK(H)p\rightarrow pK(H)p,$$
is a weak-2-local derivation, and $pK(H)p\equiv M_{m}$ for a suitable $m\in \mathbb{N}$, Theorem \ref{t weak-2-local derivations on Mn are derivations} shows that $p\Delta p|_{pK(H)p}$ is linear, and hence $$p(\Delta(p(a+b)p) - \Delta(pap) - \Delta(pbp))p = 0,$$ which by \eqref{eq 0317 1} and \eqref{eq 0317 2} implies that
  \begin{equation}\label{eq 0316 4}
    p(\Delta(a+b) - \Delta(a) - \Delta(b))p = 0.
  \end{equation}

Similar arguments to those given in the proof of Proposition \ref{p linearity on finite-ranks} apply, via \eqref{eq 0316 3old} and \eqref{eq 0316 4}, to establish the desired statement.
\end{proof}

Given a symmetric element $a$ in a C$^*$-algebra $A$, the symbol $A_a$ will denote the abelian C$^*$-subalgebra of $A$ generated by $a.$

\begin{corollary}\label{c Ka} Let $\Delta: K(H)\to K(H)$ be a weak-2-local derivation, where $H$ is a complex Hilbert space, and let $a$ be a self-adjoint element in $K(H).$ Then the restriction $\Delta|_{K(H)_a}:K(H)_a\to K(H)$ is linear.$\hfill\Box$
\end{corollary}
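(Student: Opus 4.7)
The plan is to pass from $K(H)_a$ to a larger abelian C$^*$-subalgebra generated by a sequence of mutually orthogonal minimal projections in $K(H)$, and then invoke Proposition \ref{p linearity on commutative elements} together with the $1$-homogeneity of $\Delta$.

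First I would apply the spectral theorem for self-adjoint compact operators: there exist a sequence $(\mu_k)$ of non-zero real eigenvalues of $a$ with $\mu_k\to 0$ and a sequence $(q_k)$ of mutually orthogonal rank-one (hence minimal) projections in $K(H)$, obtained by choosing an orthonormal basis of each (finite-dimensional) eigenspace, such that $a=\sum_{k}\mu_k q_k$ with norm convergence. Let $\mathcal{B}$ denote the commutative C$^*$-subalgebra of $K(H)$ generated by $\{q_k\}$; by the continuous functional calculus, every element of $K(H)_a$ is of the form $f(a)=\sum_k f(\mu_k)q_k$ for some $f\in C_0(\sigma(a)\setminus\{0\})$, and since $f(\mu_k)\to 0$ the series converges in $\mathcal{B}$. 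This establishes the inclusion $K(H)_a\subseteq \mathcal{B}$. (If $a$ is finite-rank, the sum is finite, and one could alternatively reach the same conclusion via Proposition \ref{p linearity on finite-ranks}.)

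Next, Proposition \ref{p linearity on commutative elements} yields $\Delta(b+c)=\Delta(b)+\Delta(c)$ for all $b,c\in \mathcal{B}$, and in particular for all $b,c\in K(H)_a$. Finally, weak-2-local derivations are $1$-homogeneous (as recorded in the discussion preceding Lemma \ref{l Delta(a+lambda1)}), i.e.\ $\Delta(\lambda b)=\lambda \Delta(b)$ for every $\lambda\in\mathbb{C}$ and $b\in K(H)$. Combining additivity on $K(H)_a$ with this homogeneity gives full $\mathbb{C}$-linearity of $\Delta|_{K(H)_a}$.

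There is essentially no obstacle; the corollary follows mechanically once one observes that the spectral decomposition of a compact self-adjoint operator supplies exactly the sequence of mutually orthogonal minimal projections required by Proposition \ref{p linearity on commutative elements}. The only point worth verifying is that the spectral projections may be refined to \emph{minimal} projections of $K(H)$, which is immediate from compactness: eigenspaces associated to non-zero eigenvalues are finite-dimensional and split into one-dimensional subspaces along any orthonormal basis.
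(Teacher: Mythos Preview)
Your proposal is correct and follows precisely the route the paper intends: the corollary is stated without proof because it is an immediate consequence of Proposition~\ref{p linearity on commutative elements} once one observes, via the spectral theorem for compact self-adjoint operators, that $K(H)_a$ sits inside a commutative C$^*$-subalgebra generated by a sequence of mutually orthogonal minimal projections, and then appeals to $1$-homogeneity for the scalar part. Your write-up simply makes this implicit argument explicit.
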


Let us observe that, in the hypothesis of the above corollary, although the mapping $\Delta|_{K(H)_a}:K(H)_a\to K(H)$ is linear, we cannot conclude yet that it is continuous, we simply observe that \cite[Theorem 2.1]{BenAliPeraltaRamirez} cannot be applied.\smallskip

Let $a$ be a self-adjoint element in $K(H)$. In the following result we consider $K(H)$ as a $K(H)_a$-bimodule.

\begin{theorem}\label{t derivation on Ka} Let $H$ be a complex Hilbert space, and let $a$ be a compact self-adjoint operator in the C$^*$-algebra $K(H).$ Suppose that $\Delta: K(H)\to K(H)$ be a weak-2-local derivation. Then the mapping $\Delta|_{K(H)_a}: K(H)_a\to K(H)$ is a continuous linear derivation.

Furthermore, let $\mathcal{B}$ denote the commutative C$^*$-subalgebra of $K(H)$ generated by an at most countable family $(p_n)$ of mutually orthogonal minimal projections in $K(H)$. Then $\Delta|_{\mathcal{B}}: \mathcal{B}\to K(H)$ is a continuous linear derivation.
\end{theorem}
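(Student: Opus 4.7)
The plan is to prove the second assertion (concerning $\mathcal{B}$) first, and to deduce the first by a spectral reduction. Linearity of $\Delta|_{\mathcal{B}}$ is already in hand: additivity on $\mathcal{B}$ is Proposition \ref{p linearity on commutative elements}, and combining it with the $1$-homogeneity of weak-2-local derivations \cite[Lemma 2.1]{NiPe} yields linearity.

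For the Leibniz identity $\Delta(bc)=\Delta(b)c+b\Delta(c)$ on $\mathcal{B}$, enumerate $(p_n)_{n=1}^{\infty}$ (the finite case being Corollary \ref{c weak-2-local derivations on finite dimensional C-algebras are derivations}) and set $q_M=\sum_{n=1}^M p_n$, a finite-rank projection in $K(H)$ with $q_MK(H)q_M\cong M_M$. By \cite[Proposition 2.7]{NiPe}, the compression $q_M\Delta q_M|_{q_MK(H)q_M}$ is a weak-2-local derivation on this matrix algebra, and by Theorem \ref{t weak-2-local derivations on Mn are derivations} it is a linear derivation. Writing $b_M=q_Mbq_M$ and $c_M=q_Mcq_M$ (note $q_M$ commutes with every element of $\mathcal{B}$), Lemma \ref{l pDelta(pap+b)p} produces $q_M\Delta(b)q_M=q_M\Delta(b_M)q_M$, and similarly for $c$ and $bc$; the Leibniz identity on the compression therefore rewrites as
\begin{equation*}
q_M\Delta(bc)q_M=q_M\Delta(b)q_M\cdot c_M+b_M\cdot q_M\Delta(c)q_M.
\end{equation*}
Letting $P$ denote the strong limit of $q_M$ (the projection onto the closed linear span of $\bigcup_n p_nH$), compactness of $\Delta(b),\Delta(c),\Delta(bc)\in K(H)$ gives norm-convergence $q_Mxq_M\to PxP$, so passing to the limit yields the $P$-block Leibniz relation. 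The off-diagonal contributions are handled by applying Lemma \ref{NiPe p27 for ideal} and the corresponding extension of Lemma \ref{l pDelta(pap+b)p} with the projection $I-P\in B(H)$, which annihilates the $(I-P)(I-P)$ component of $\Delta|_{\mathcal{B}}$, together with a block-by-block matching against candidate inner derivations $[w,\cdot]$ via weak-2-local tests by functionals supported in $PK(H)(I-P)$ and $(I-P)K(H)P$ (exploiting $(I-P)b=c(I-P)=0$ to collapse the irrelevant terms). Assembling the four blocks gives the global Leibniz identity on $\mathcal{B}$, after which norm-continuity follows automatically from Ringrose's theorem \cite{Ringrose72} on derivations from a C$^*$-algebra into a Banach bimodule.

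For the first assertion, take the spectral decomposition $a=\sum\lambda_n\pi_n$ with finite-rank spectral projections $\pi_n$, refine each $\pi_n$ into an orthogonal sum of rank-one projections, and let $\mathcal{B}$ be the commutative C$^*$-subalgebra of $K(H)$ generated by this enlarged countable family of mutually orthogonal minimal projections; then $K(H)_a\subseteq\mathcal{B}$ and $\Delta|_{K(H)_a}$ inherits from $\Delta|_{\mathcal{B}}$ the property of being a continuous linear derivation. The main obstacle in this plan is the off-diagonal block analysis when $(p_n)$ does not span $H$; it uses essentially both the compactness of the values of $\Delta$ (so that the cut-off identity converges in norm rather than only in a weaker topology) and Lemma \ref{NiPe p27 for ideal} (so that the $(I-P)(I-P)$ block can be killed), after which the remaining off-diagonal matching reduces to algebraic identities that are automatic for inner derivations.
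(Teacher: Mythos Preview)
Your overall architecture---compress by a finite-rank projection, invoke Theorem~\ref{t weak-2-local derivations on Mn are derivations} on the compression, transfer via Lemma~\ref{l pDelta(pap+b)p}, and then appeal to automatic continuity---is the same as the paper's. The spectral reduction $K(H)_a\subseteq\mathcal{B}$ at the end is also fine.

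The genuine gap is in your off-diagonal block analysis. By sending $q_M$ only to $P=\sup_n p_n$, you obtain merely the $P$-block identity $P\Delta(bc)P=P\Delta(b)P\,c+b\,P\Delta(c)P$, and you then try to recover the $P\,\cdot\,(I-P)$ and $(I-P)\,\cdot\,P$ blocks by ``weak-2-local tests by functionals supported in $PK(H)(I-P)$ and $(I-P)K(H)P$''. This does not work as stated: for a functional $\phi=(I-P)\phi P$ and the pair $(bc,b)$, the weak-2-local property yields a derivation $D$ with $\phi\Delta(bc)=\phi D(bc)=\phi(D(b)c)$ and $\phi\Delta(b)=\phi D(b)$, but what you need is $\phi(\Delta(b)c)$, which involves the \emph{different} functional $\psi(x)=\phi(xc)$; there is no way to force $\psi\Delta(b)=\psi D(b)$ with the \emph{same} $D$. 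The argument cannot be closed at this level of generality, and your sentence ``block-by-block matching against candidate inner derivations'' is hiding a real obstruction, not a routine verification.

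The paper sidesteps this entirely by a small but decisive change in the choice of projection: instead of $q_M$, it takes a finite-rank projection $p$ of the form $q_M+r$ with $r\leq I-P$, chosen (using compactness of $\Delta(b^2),\,b\Delta(b),\,\Delta(b)b$) so that $\|\xi-p\xi p\|<\varepsilon$ for these three operators. Because $rb=br=0$, one still has $b=pbp+p^{\perp}bp^{\perp}$, so Lemma~\ref{l pDelta(pap+b)p} applies and the compression argument yields $p(\Delta(b^2)-\Delta(b)b-b\Delta(b))p=0$; the $\varepsilon$-approximation then gives the \emph{global} Jordan identity with no block analysis at all. The paper then upgrades from Jordan derivation to derivation via \cite[Theorem 6.2]{John96} and obtains continuity from \cite[Corollary 17]{PeRu}. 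Your Ringrose citation for continuity would also be acceptable once the derivation identity is in hand, and aiming directly at the two-variable Leibniz identity (rather than the Jordan identity) would work with the same $\varepsilon$-trick; the essential missing idea is allowing the approximating projection to grow beyond $P$ toward $I$.
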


\begin{proof} Corollary \ref{c Ka} implies that $\Delta|_{K(H)_a}$ is linear. We shall show next that $\Delta|_{K(H)_a}$ is a Jordan derivation.\smallskip

Let us take a countable family $(p_n)$ of mutually orthogonal minimal projections in $K(H)$ satisfying that $\displaystyle a= \sum_{n=1}^{\infty} \lambda_n p_n,$  where $(\lambda_n)\in c_0$ and $\lambda_n\in \mathbb{R}$ for every $n$. Every element $b$ in $K(H)_a$ writes in the form $\displaystyle b= \sum_{n=1}^{\infty} \mu_n p_n,$ where $(\mu_n)\in c_0.$ Since $\Delta(b)$, $\Delta(b^2)$, $b\Delta(b)$ and $\Delta(b)b$ are compact operators, we deduce, from the spectral resolution of $b$, that for each $\varepsilon>0,$ there exists a finite-rank projection $p$ in $K(H)$ satisfying
  \begin{equation}\label{eq 0316 3}
    \|\xi -p\xi p\|<\varepsilon,
  \end{equation} for every $\xi\in \{ \Delta(b^2), b \Delta(b), \Delta(b) b\}$ and
  \begin{equation}\label{eq 0326 1}
    b=pbp+p^\perp b p^\perp\ \ \hbox{and}\ \ b^2=pb^2 p+p^\perp b^2 p^\perp.
  \end{equation}

Lemma \ref{l pDelta(pap+b)p} combined with \eqref{eq 0326 1} imply that $$p \Delta ( b ) p = p \Delta ( p b p) p, \hbox{ and } p \Delta ( b^2 ) p = p \Delta ( p b^2 p) p.$$

Proposition 2.7 in \cite{NiPe} assures that the mapping $$p\Delta p|_{pK(H)p}: pK(H)p\rightarrow pK(H)p,$$
is a weak-2-local derivation, and since $pK(H)p\equiv M_{m}$ for a suitable $m\in \mathbb{N}$, Theorem \ref{t weak-2-local derivations on Mn are derivations} shows that $p\Delta p|_{pK(H)p}$ is a linear derivation, therefore \begin{equation}
\label{eq 2503 2} p \Delta ((p  b p) (pbp )) p  = p \Delta (p b p) p (pbp) + (pbp) p \Delta (p b p) p.
\end{equation}

Now, it follows from \eqref{eq 0326 1} and Lemma \ref{l pDelta(pap+b)p} that $$p \left(\Delta ( b^2 ) - \Delta ( b ) b - b  \Delta ( b ) \right) p  = p \Delta (p b^2 p) p - p\Delta ( b ) p (p bp)  - (pbp) p  \Delta ( b ) p$$ $$= p \Delta ((p  b p) (pbp )) p  - p \Delta (p b p) p (pbp) - (pbp) p \Delta (p b p) p = \hbox{ (by \eqref{eq 2503 2}) }=0.$$

It can be now deduced from \eqref{eq 0316 3} and \eqref{eq 0326 1} that $$\left\| \Delta (b^2) - \Delta(b) b - b\Delta (b)  \right\| \leq \left\| p (\Delta (b^2) - \Delta(b) b - b\Delta (b) )p  \right\|$$ $$+ \left\| (\Delta (b^2) - \Delta(b) b - b\Delta (b))- p (\Delta (b^2) - \Delta(b) b - b\Delta (b)) p  \right\|<3 \varepsilon.$$ The arbitrariness of $\varepsilon>0 $ proves that $$\Delta (b^2) - \Delta(b) b - b\Delta (b) = 0,$$ for every $b\in K(H)_a.$ Therefore, the mapping $\Delta|_{K(H)_a}: K(H)_a\to K(H)$ is a linear Jordan derivation. Corollary 17 in \cite{PeRu} shows that $\Delta|_{K(H)_a}$ is continuous and \cite[Theorem 6.2]{John96} gives the desired statement. The proof of the second conclusion follows similarly.
\end{proof}

Combining Theorem \ref{t derivation on Ka} with \cite[Lemma 2.3]{NiPe} we can deduce the following result.

\begin{corollary}\label{c quasi-linear functional on K(H)} Let $\Delta: K(H)\to K(H)$ be a weak-2-local $^*$-derivation, where $H$ is a complex Hilbert space. Then $\Delta$ is a quasi-linear operator on $K(H)$.$\hfill\Box$
\end{corollary}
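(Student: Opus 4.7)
The plan is to verify directly the two axioms $(a)$ and $(b)$ in Aarnes' definition of a quasi-linear operator for the map $\Delta\colon K(H)\to K(H)$, using the two ingredients indicated in the statement: Theorem \ref{t derivation on Ka} above and \cite[Lemma 2.3]{NiPe}.

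For axiom $(a)$, given any self-adjoint element $x\in K(H)$, I invoke Theorem \ref{t derivation on Ka} to conclude that the restriction $\Delta|_{K(H)_x}\colon K(H)_x\to K(H)$ is a (continuous) linear derivation; in particular it is linear. This is exactly what is demanded in clause $(a)$ of the definition of a quasi-linear operator, so no additional work is required here.

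Axiom $(b)$ requires that $\Delta(a+ib)=\Delta(a)+i\Delta(b)$ whenever $a,b$ are self-adjoint elements of $K(H)$. This is where the hypothesis that $\Delta$ is a weak-2-local \emph{$^*$-derivation} (rather than a general weak-2-local derivation) is essential, and this is the role of \cite[Lemma 2.3]{NiPe}: for each functional $\phi\in K(H)^*$ and each pair of self-adjoint elements $a,b$, one applies the weak-2-local property of $\Delta$ at $\phi$ to suitably chosen pairs drawn from $\{a,b,a+ib\}$, obtaining $^*$-derivations $D_1,D_2$ that (being $^*$-derivations, hence $\mathbb{C}$-linear) satisfy $D_j(a+ib)=D_j(a)+iD_j(b)$; an elementary comparison of the resulting scalar identities then yields $\phi\Delta(a+ib)=\phi\Delta(a)+i\phi\Delta(b)$, and the Hahn--Banach theorem delivers the required vector equality.

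The step that would be delicate if one tried to do it from scratch is clause $(b)$: the weak-2-local property only furnishes a common $^*$-derivation realizing $\Delta$ on \emph{two} points simultaneously, while the target identity involves the three points $a$, $b$, and $a+ib$. However, since this technical juggling has already been isolated in \cite[Lemma 2.3]{NiPe}, the only thing left to do in the present corollary is to combine that lemma with Theorem \ref{t derivation on Ka} and observe that together they verify both clauses in the definition of a quasi-linear operator, completing the proof.
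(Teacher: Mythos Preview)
Your proposal is correct and follows precisely the approach indicated by the paper: the paper's proof consists solely of the remark ``Combining Theorem~\ref{t derivation on Ka} with \cite[Lemma 2.3]{NiPe}'', and you have simply unpacked this by observing that Theorem~\ref{t derivation on Ka} (applicable because every weak-2-local $^*$-derivation is in particular a weak-2-local derivation) yields clause~$(a)$, while \cite[Lemma 2.3]{NiPe} yields clause~$(b)$. Your sketch of the mechanism behind \cite[Lemma 2.3]{NiPe} is an accurate gloss on that external result, but for the purposes of this corollary it suffices, as you note, to cite it.
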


In most of the papers studying 2-local and weak-2-local derivations on von Neumann algebras the arguments rely on ingenious appropriate applications of the Bunce-Wright-Mackey-Gleason theorem \cite{BuWri92} (compare, for example, \cite{AyuKuday2014,KOPR2014} and \cite{AyuKudPe2014}). The just quoted theorem provides the following powerful tool: Let $\mathcal{P} (M)$ denote the lattice of projections in a von
Neumann algebra $M.$
Let $X$ be a Banach space. A mapping $\mu:
\mathcal{P} (M)\to X$ is said to be \emph{finitely additive} when
$$
\mu \left(\sum\limits_{i=1}^n p_i\right) = \sum\limits_{i=1}^{n}
\mu (p_i),
$$
for every family $p_1,\ldots, p_n$ of mutually orthogonal
projections in $M.$ If the set $\left\{ \|\mu (p)\|: p \in \mathcal{P} (M) \right\}$ is bounded, we shall say that $\mu$ is \emph{bounded}.\smallskip

The Bunce-Wright-Mackey-Gleason theorem \cite{BuWri92} affirms that if $M$ has no summand of type $I_2$, then every bounded finitely additive mapping $\mu: \mathcal{P} (M)\to X$ extends to a bounded linear operator from $M$ to $X$.\smallskip

Proposition 3.4 in \cite{NiPe} assures that, for each weak-2-local derivation $\Delta$ on a von Neumann algebra $M$, the measure $\mu_{_\Delta} : \mathcal{P} (M) \to M$ is finitely additive. The boundedness of this measure $\mu_{_\Delta}$ is, in general, an open problem.\smallskip

It is known that every family $(p_i)_{i \in I}$ of mutually orthogonal projections in
a von Neumann algebra $M$ is summable with respect to the weak$^*$ topology of  $M$
and $\displaystyle p = \hbox{weak}^*\hbox{-}\sum_{i\in I} p_i$ is a projection in $M$
(cf. \cite[Definition 1.13.4]{Sak}). We shall usually write $\displaystyle\sum_{i\in I} p_i$ instead of $\displaystyle\hbox{weak}^*\hbox{-}\sum_{i\in I} p_i$. Actually, the family $(p_i)$ satisfies a better summability condition, that is, the series $\displaystyle\sum_{i\in I} p_i$ also is summable with respect to the strong$^*$-topology of $M$, that is, $\displaystyle p= \hbox{weak}^*\hbox{-}\sum_{i\in I} p_i= \hbox{strong}^*\hbox{-}\sum_{i\in I} p_i$ (compare, for example, \cite[commments before Proposition 2.7]{KOPR2014}).\smallskip

We recall that, given a von Neumann algebra $M$, with predual $M_*$, the set of all normal states on $M$ (i.e. the set of all norm-one, positive functionals in $M_*$) is denoted by $S_{n} (M)$. Following standard notation \cite[Definition 1.8.6]{Sak}, the \emph{strong$^*$ topology} of $M$ (denoted by $s^*(M, M_*)$) is the locally convex topology on $M$ defined by the seminomrs $\||a|\|^2_{\phi} := \phi (\frac{ a a^* +a^* a}{2})$, $(a\in A),$ where $\phi$ runs in  $S_{n} (M)$.  The strong$^*$ topology of $M$ satisfies certain interesting properties, for example, a functional $\psi: M \to \mathbb{C}$ is strong$^*$ continuous if and only if it is weak$^*$ continuous (see \cite[Corollary 1.8.10]{Sak}). A consequence of the Grothendieck's inequality implies that a linear map between von Neumann algebras is strong$^*$ continuous if and only it is weak$^*$ continuous (cf. \cite[page 621]{PeRo}). Another interesting property of this topology asserts that the product of every von Neumann algebra is jointly strong$^*$ continuous on bounded sets (see \cite[Proposition 1.8.12]{Sak}). Finally, we also recall that given a von Neumann subalgebra $N$ of $M$, the strong$^*$-topology of $N$ coincides with the restriction to $N$ of the strong$^*$-topology of $M$, that is, $S^*(N,N_*) = S^* (M,M_*)|_{N}$ (cf. \cite[COROLLARY]{Bun01}).\smallskip

Let $M$ be a von Neumann algebra and let $\tau$ denote the weak$^*$ or the strong$^*$-topology. A function $\mu: \mathcal{P} (M)\to M$ is said to be \emph{$\tau$-completely additive} if
\begin{equation}\label{eq completely additive}
\mu\left(\hbox{weak$^*$-}\sum\limits_{i\in I} p_i\right) =
\hbox{$\tau$-}\sum\limits_{i\in I}\mu(p_i)
\end{equation} for every family $\{p_i\}_{i\in I}$ of mutually orthogonal projections in $\mathcal{P}(M),$ where the summability of the right hand side is with respect to the topology $\tau$. S. Dorofeev and A.N. Shertsnev supplemented the previous Bunce-Wright-Mackey-Gleason theorem by showing that every completely additive measure on the set of projections of a von Neumann algebra with no type $I_n$ ($n<\infty$) direct summands is bounded (compare \cite{Doro1990,Doro1990b,Doro,DoroShers1990} and \cite{Shers2008} or the monograph \cite{Dvurech}).\smallskip

\begin{theorem}\label{t weak-2-local derivations on B(l2)} Let $H$ denote a separable infinite dimensional complex Hilbert space, and let $\Delta: B(H)\to B(H)$ be a {\rm(}non-necessarily linear nor continuous{\rm)} weak-2-local derivation. Then $\Delta|_{B(H)_{sa}} : B(H)_{sa} \to B(H)$ is a linear map.
\end{theorem}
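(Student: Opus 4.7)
The plan is to realize $\mu : \mathcal{P}(B(H)) \to B(H)$, $\mu(p) := \Delta(p)$, as a completely additive measure and then invoke the Dorofeev--Shertsnev boundedness theorem together with the Bunce--Wright--Mackey--Gleason extension theorem. These will produce a bounded linear $\widetilde{\Delta} : B(H) \to B(H)$ with $\widetilde{\Delta}(p) = \Delta(p)$ for every $p \in \mathcal{P}(B(H))$, and the final step will be to check $\widetilde{\Delta} = \Delta$ on $B(H)_{sa}$. Finite additivity of $\mu$ is \cite[Proposition 3.4]{NiPe}, so the main work is to promote finite additivity to complete additivity.

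For complete additivity, I would use the separability of $H$ to reduce to a countable family $(p_n)$ of mutually orthogonal projections whose weak$^*$-sum is $p$, further refining each $p_n = \sum_k e_{n,k}$ into a countable strong$^*$-sum of mutually orthogonal rank-one projections of $K(H)$. Let $\mathcal{B} \subseteq K(H)$ be the commutative C$^*$-subalgebra generated by the $e_{n,k}$, so that $\mathcal{B} \cong c_0$ and its weak$^*$-closure $\mathcal{B}'' \subset B(H)$ is an abelian von Neumann subalgebra containing $p$ and each $p_n$. The second conclusion of Theorem \ref{t derivation on Ka} shows that $\Delta|_{\mathcal{B}} : \mathcal{B} \to K(H)$ is a bounded linear derivation, so it admits a unique weak$^*$-continuous linear extension $T : \mathcal{B}'' \to B(H)$. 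The crucial claim is $T(q) = \Delta(q)$ for every projection $q \in \mathcal{B}''$: granted this, weak$^*$-continuity of $T$ combined with the strong$^*$-convergence of the finite partial sums $q_F := \sum_{(n,k) \in F}e_{n,k} \leq q$ to $q$ and finite additivity of $\mu$ on $\mathcal{B}$ yields $\Delta(q) = \sum \Delta(e_{n,k})$ for $q \in \{p, p_n\}$, whence $\Delta(p) = \sum_n \Delta(p_n)$ in the strong$^*$ topology. To establish the crucial claim I would fix a normal functional $\phi \in B(H)_*$, apply the weak-2-local property at $(\phi, q, q_F)$ to produce an inner derivation $[w_F,\,\cdot\,]$ with $\phi\Delta(q) - \phi\Delta(q_F) = \phi[w_F, q - q_F]$, and use the strong$^*$-convergence $q - q_F \to 0$ together with the trace-norm continuity of $x \mapsto \phi_0 x$ on bounded strong$^*$-convergent nets (recalled in the preliminaries to the theorem) to force the right-hand side to $0$, after a suitable normalization of $w_F$ (modulo the centre of $B(H)$) that keeps the derivation norms bounded. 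This last control is the main technical obstacle of the argument.

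Once complete additivity is in place, Dorofeev--Shertsnev applies to $\mu$ (as $B(H)$ is a type $I_\infty$ factor, hence admits no type $I_n$ direct summand with $n<\infty$) and delivers boundedness, and then Bunce--Wright--Mackey--Gleason produces the bounded linear $\widetilde{\Delta} : B(H) \to B(H)$ coinciding with $\mu$ on $\mathcal{P}(B(H))$. For the verification on $B(H)_{sa}$, for an element with finite spectral representation $a = \sum_k \lambda_k E_k$ I would combine linearity of $\widetilde{\Delta}$, the $1$-homogeneity of $\Delta$ on projections from \cite[Lemma 2.3]{NiPe}, and weak-2-local arguments at $(\phi, a, E_k)$ for varying $k$ to conclude $\Delta(a) = \sum_k \lambda_k \Delta(E_k) = \widetilde{\Delta}(a)$; for general $a$ I would approximate $a$ in norm by such simple spectral elements via the spectral theorem and transfer the identity by norm continuity of $\widetilde{\Delta}$ together with a weak-2-local/trace-class continuity argument in the spirit of the proof of Theorem \ref{t derivation on Ka}.
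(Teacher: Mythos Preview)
Your overall architecture matches the paper's: show that $\mu(p)=\Delta(p)$ is strong$^*$-completely additive, invoke Dorofeev--Shertsnev and then Bunce--Wright--Mackey--Gleason to obtain a bounded linear $G$ agreeing with $\Delta$ on projections, and finally extend the agreement to all of $B(H)_{sa}$. The divergence, and the gap, is in how you handle the ``crucial claim'' $T(q)=\Delta(q)$ and the final extension to self-adjoints.

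For the crucial claim, your argument---apply the weak-2-local property at $(\phi,q,q_F)$ to write $\phi\Delta(q)-\phi\Delta(q_F)=\phi[w_F,q-q_F]$ and pass to the limit---needs a uniform bound on $\|w_F\|$, which you yourself flag as ``the main technical obstacle.'' It is a genuine obstacle that the proposal does not surmount: the weak-2-local hypothesis produces \emph{some} derivation with no norm control whatsoever, and normalising modulo the centre only yields $\|w_F\|=\tfrac12\|D_{q,q_F,\phi}\|$, still possibly unbounded in $F$. Since $\|q-q_F\|=1$ throughout, the limit cannot be taken. Your final step has the same defect: the weak-2-local property at $(\phi,a,E_k)$ relates $\phi\Delta(a)$ to $\phi D_k(a)=\sum_j\lambda_j\,\phi D_k(E_j)$, but only $\phi D_k(E_k)$ equals $\phi\Delta(E_k)$; the terms with $j\neq k$ are uncontrolled, so one cannot conclude $\phi\Delta(a)=\sum_k\lambda_k\,\phi\Delta(E_k)$.

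The paper bypasses both difficulties with Lemma~\ref{l pDelta(pap+b)p}: whenever $r$ is a projection and $rb=br=0$, one has $r\Delta(a+b)r=r\Delta(a)r$. Taking $r=1-p+p_N$ and $b=p-p_N$ (which is orthogonal to $r$) gives the \emph{exact} identity
\[
(1-p+p_N)\,\Delta(p)\,(1-p+p_N)=(1-p+p_N)\,\Delta(p_N)\,(1-p+p_N),
\]
valid for every $N$ with no boundedness hypothesis on $\Delta$. Now $1-p+p_N\to 1$ and $\Delta(p_N)\to T(p)$ strong$^*$, and joint strong$^*$-continuity of the product on bounded sets yields $\Delta(p)=T(p)$ immediately. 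The identical compression trick, with a general self-adjoint $a$ in place of $p$ and $\sum_{n\leq N}\lambda_n p_n$ in place of $p_N$, handles the extension to $B(H)_{sa}$ directly, so no separate finite-spectrum-plus-approximation argument is needed.
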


\begin{proof} Throughout this proof, $K(H)$ is regarded as a C$^*$-subalgebra and a closed two-sided ideal of $B(H)$. Lemma \ref{l ideals are fixed} implies that $\Delta (K(H)) \subseteq K(H)$ and $\Delta |_{K(H)} : K(H) \to K(H)$ is a weak-2-local derivation on $K(H)$.\smallskip

Let $p$ be a projection in $ B(H)$. We observe that $p$ might not belong to $K(H)$. Anyway, the separability of $H$ assures the existence of an at most countable (possibly finite) family $(p_n)_{n\in I}$ of minimal mutually orthogonal projections in $B(H)$ satisfying $\displaystyle p = \hbox{weak$^*$-}\sum_{n\in I} p_n$. Let $\mathcal{B}_{(p_n)}$ denote the commutative C$^*$-subalgebra of $K(H)$ generated by the $p_n$'s. Theorem \ref{t derivation on Ka} implies that $\Delta|_{\mathcal{B}_{(p_n)}} :\mathcal{B}_{(p_n)}\to K(H)$ is a continuous linear derivation. The bitransposed operator $(\Delta|_{\mathcal{B}_{(p_n)}})^{**} :\mathcal{B}_{(p_n)}^{**}\subseteq B(H)\to B(H)$ is a weak$^*$-continuous linear derivation (cf. \cite[Remark 2.6]{AyuKudPe2014}), and hence strong$^*$-continuous (cf. \cite[page 621]{PeRo}). We obviously know that $p\in \mathcal{B}_{(p_n)}^{**}$. It follows from these properties, and the fact that $S^*(B(H),B(H)_*)|_{\mathcal{B}_{(p_n)}^{**}} = S^* (\mathcal{B}_{(p_n)}^{**},\mathcal{B}_{(p_n)}^{*})$, that $$(\Delta|_{\mathcal{B}_{(p_n)}})^{**}(p) = \hbox{strong$^*$-}\sum_{n=1}^{\infty} \Delta (p_n).$$
We claim that $\Delta(p) =  (\Delta|_{\mathcal{B}_{(p_n)}})^{**}(p)$. Indeed, for each natural $N$, let $p_{N}$ denote $\displaystyle \sum_{k=1}^{N} p_k$. We already know that $\displaystyle \hbox{strong$^*$-}\lim_{N} \Delta (p_{N}) = (\Delta|_{\mathcal{B}_{(p_n)}})^{**}(p).$ By Lemma \ref{l pDelta(pap+b)p}, the identity $$(1-p+p_N) \Delta (p) (1-p+p_N) = (1-p+p_N) \Delta (p_N) (1-p+p_N),$$ holds for every natural $N$. Taking strong$^*$-limits in the above equality and having in mind the joint strong$^*$ continuity of the product of $B(H)$, we obtain \begin{equation}\label{eq Delta preserves strong* summs} \Delta(p) =  (\Delta|_{\mathcal{B}_{(p_n)}})^{**}(p)= \hbox{strong$^*$-}\sum_{n=1}^{\infty} \Delta (p_n).
\end{equation} \smallskip

Let us define a measure $$\mu : \mathcal{P} (B(H)) \to B(H)$$ $$p\mapsto \Delta(p).$$ Proposition 3.4 in \cite{NiPe} implies that $\mu$ is finitely additive. We claim that $\mu$ is a strong$^*$-completely additive vector measure on $\mathcal{P} (B(H))$. Indeed, let $(p_k)$ be an at most countable family of mutually orthogonal projections in $\mathcal{P} (B(H))$. By the separability of $H$, for each natural $k$, there exists an at most countable family $(p^k_n)_n$ of mutually orthogonal minimal projections in $B(H)$ such that $\displaystyle p_k =\sum_{n=1}^{\infty} p_n^k$. Let $\mathcal{B}_k$ denote the C$^*$-subalgebra of $K(H)$ generated by $\{p_n^k: n\in \mathbb{N}\}$, and let $\mathcal{B}$ be the C$^*$-subalgebra of $K(H)$ generated by $\{p_n^k: k,n\in \mathbb{N}\}.$ By \eqref{eq Delta preserves strong* summs} and the strong$^*$-continuity of $(\Delta|_{\mathcal{B}})^{**}$, we have $$\mu \left(\sum_{k=1}^{\infty} p_k\right) = \Delta \left(\sum_{k=1}^{\infty} p_k\right) = (\Delta|_{\mathcal{B}})^{**} \left(\sum_{k=1}^{\infty} p_k\right) = \hbox{strong$^*$-}\sum_{k=1}^{\infty}  (\Delta|_{\mathcal{B}})^{**} (p_k) =$$ $$= \hbox{strong$^*$-}\sum_{k=1}^{\infty}  \left(\hbox{strong$^*$-}\sum_{n=1}^{\infty} \Delta(p^k_n)\right) =  \hbox{strong$^*$-}\sum_{k=1}^{\infty}  (\Delta|_{\mathcal{B}_k})^{**} (p_k)$$ $$=  \hbox{strong$^*$-}\sum_{k=1}^{\infty}  \Delta (p_k)=  \hbox{strong$^*$-}\sum_{k=1}^{\infty}  \mu (p_k),$$ which proves that $\mu$ is a strong$^*$-completely additive measure.\smallskip

By the Mackey-Gleason theorem there exists a bounded linear operator $G: B(H) \to  B(H)$ satisfying that $G(p) = \mu(p) =\Delta(p)$, for every $p\in \mathcal{P} ( B(H))$. Theorem \ref{t derivation on Ka} combined with the spectral resolution of a compact self-adjoint operator in $B(H)$ imply that $$\Delta (a) = G(a),$$ for every $a\in K(H)_{sa}$.\smallskip

We claim that $G: B(H) \to  B(H)$ is weak$^*$-continuous. Let $(p_k)$ be a countable family of mutually orthogonal projections in $\mathcal{P} (B(H))$. We have shown above that $$G \left(\sum_{k=1}^{\infty} p_k\right) = \mu\left(\sum_{k=1}^{\infty} p_k\right) = \hbox{strong$^*$-}\sum_{k=1}^{\infty} \mu \left(p_k\right) = \hbox{strong$^*$-}\sum_{k=1}^{\infty} G\left(p_k\right).$$ We conclude from the separability of $H$ and Corollary III.3.11 in \cite{Takesaki} that $\varphi G\in M_*$, for every $\varphi\in M_*$, which implies that $G$ is weak$^*$-continuous and hence strong$^*$ continuous (cf. \cite[page 621]{PeRo}).\smallskip

Take now a self-adjoint operator $a\in B(H)$. Having in mind the separability of $H$, we can write $\displaystyle a =\sum_{n=1}^{\infty} \lambda_n p_n$, where $(\lambda_n)$ is a bounded sequence of real numbers, $(p_n)$ is a sequence of mutually orthogonal minimal projections in $B(H)$, and the series converges with respect to the strong$^*$-topology of $B(H)$.\smallskip

Arguing as above, let $\displaystyle p = \hbox{weak$^*$-}\sum_{n=1}^{\infty} p_n$, and let $\mathcal{B}_{(p_n)}$ denote the commutative C$^*$-subalgebra of $K(H)$ generated by the $p_n$'s. By Theorem \ref{t derivation on Ka}, the mapping $\Delta|_{\mathcal{B}_{(p_n)}} :\mathcal{B}_{(p_n)}\to K(H)$ is a continuous linear derivation. Therefore, the operator $(\Delta|_{\mathcal{B}_{(p_n)}})^{**} :\mathcal{B}_{(p_n)}^{**}\subseteq B(H)\to B(H)$ is a weak$^*$-continuous linear derivation (cf. \cite[Remark 2.6]{AyuKudPe2014}), and hence strong$^*$-continuous (cf. \cite[page 621]{PeRo}). In this case, $p,a\in \mathcal{B}_{(p_n)}^{**}$. We deduce from the strong$^*$-continuity of $(\Delta|_{\mathcal{B}_{(p_n)}})^{**}$, and the fact $S^*(B(H),B(H)_*)|_{\mathcal{B}_{(p_n)}^{**}} = S^* (\mathcal{B}_{(p_n)}^{**},\mathcal{B}_{(p_n)}^{*})$, that $$ (\Delta|_{\mathcal{B}_{(p_n)}})^{**}(a) = \hbox{strong$^*$-}\sum_{n=1}^{\infty} \lambda_n \Delta (p_n).$$ We shall prove that $\Delta(a) =  (\Delta|_{\mathcal{B}_{(p_n)}})^{**}(a)$, and hence \begin{equation}\label{eq Delta and G coincide on selfadjoints} \Delta(a) =  (\Delta|_{\mathcal{B}_{(p_n)}})^{**}(a) = \hbox{strong$^*$-}\sum_{n=1}^{\infty} \lambda_n \Delta (p_n) = \hbox{strong$^*$-}\sum_{n=1}^{\infty} \lambda_n G (p_n)
 \end{equation}$$= \hbox{strong$^*$-}\lim_{N\to\infty} G \left( \sum_{n=1}^{N} \lambda_n p_n\right)= \hbox{(by the strong$^*$-continuity of $G$)} = G(a).$$ To this end, for each natural $N$, let $p_{N}$ denote $\displaystyle \sum_{k=1}^{N} p_k$. We already know that $\displaystyle \hbox{strong$^*$-}\lim_{N} \Delta \left( \sum_{n=1}^{N} \lambda_n p_n\right) = (\Delta|_{\mathcal{B}_{(p_n)}})^{**}(a).$ Lemma \ref{l pDelta(pap+b)p} implies that $$(1-p+p_N) \Delta (a) (1-p+p_N) = (1-p+p_N) \Delta \left( \sum_{n=1}^{N} \lambda_n p_n\right) (1-p+p_N),$$ for every natural $N$. Taking strong$^*$-limits in the above identity, it follows from the joint strong$^*$ continuity of the product in $B(H)$ that $$ \Delta(a) =  (\Delta|_{\mathcal{B}_{(p_n)}})^{**}(a),$$ as desired. \smallskip

We have proved that $\Delta (a) = G(a)$, for every $a\in B(H)_{sa}$ (compare \eqref{eq Delta and G coincide on selfadjoints}). Therefore, $\Delta (a+b) = G(a+b) = G(a) + G(b) = \Delta (a) +\Delta (b)$, for every $a,b\in B(H)_{sa}$.\end{proof}

Our last result is a direct consequence of the previous Theorem \ref{t weak-2-local derivations on B(l2)} and \cite[Lemma 2.3]{NiPe}.

\begin{theorem}\label{t weak-2-local *derivations on B(l2)} Let $H$ be a separable complex Hilbert space. Then
every {\rm(}non-necessarily linear nor continuous{\rm)} weak-2-local $^*$-derivation on $B(H)$ is linear and a $^*$-derivation.$\hfill\Box$
\end{theorem}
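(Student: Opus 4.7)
The plan is to assemble the statement from two inputs already on the table: Theorem \ref{t weak-2-local derivations on B(l2)}, which gives $\mathbb{R}$-linearity of $\Delta$ restricted to the self-adjoint part $B(H)_{sa}$, and \cite[Lemma 2.3]{NiPe}, which is the bookkeeping device that lifts weak-2-local $^*$-derivation information from self-adjoint to arbitrary elements. This is precisely the same role Lemma 2.3 played when passing from Theorem \ref{t derivation on Ka} to Corollary \ref{c quasi-linear functional on K(H)}, so the proof is meant to be assembled rather than discovered.

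Concretely, I would write every $a\in B(H)$ as $a=h+ik$ with $h=(a+a^*)/2$ and $k=(a-a^*)/(2i)$ self-adjoint. Theorem \ref{t weak-2-local derivations on B(l2)} yields $\mathbb{R}$-additivity of $\Delta$ on each self-adjoint summand, while \cite[Lemma 2.3]{NiPe} (keeping in mind that a $^*$-derivation sends self-adjoints to self-adjoints) is designed exactly to give the identity $\Delta(h+ik)=\Delta(h)+i\Delta(k)$, together with the inclusion $\Delta(B(H)_{sa})\subseteq B(H)_{sa}$ when $\Delta$ is a weak-2-local $^*$-derivation. Additivity of $\Delta$ on all of $B(H)$ is then immediate by decomposing $x$ and $y$ into their real and imaginary parts and invoking $\mathbb{R}$-linearity on each component. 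Complex homogeneity follows from additivity together with the $1$-homogeneity of weak-2-local derivations recalled in the paragraph preceding Lemma \ref{l Delta(a+lambda1)}, and the symmetry $\Delta(a^*)=\Delta(a)^*$ drops out of the same decomposition plus the self-adjointness preservation.

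At this stage $\Delta$ is a $\mathbb{C}$-linear, $^*$-preserving map on $B(H)$ which is still a weak-2-local $^*$-derivation; in particular it is a \emph{weak-local} $^*$-derivation in the sense of \cite{BenAliPeraltaRamirez}. Invoking \cite[Theorem 3.4]{BenAliPeraltaRamirez}, quoted in the introduction, to the effect that every weak-local derivation on a C$^*$-algebra is already a derivation, upgrades our linear $\Delta$ to a genuine derivation, completing the proof that it is a linear $^*$-derivation. I do not foresee a real obstacle here: the hard analytic content has been absorbed upstream into Theorem \ref{t weak-2-local derivations on B(l2)} (and hence into the Bunce--Wright--Mackey--Gleason and Dorofeev--Shertsnev theorems) and into Lemma 2.3 of \cite{NiPe}, which is why the authors can — and do — describe the result as a direct consequence.
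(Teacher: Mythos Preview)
Your proposal is correct and follows exactly the route the paper indicates: the authors' ``proof'' is literally the one-line remark that the result is a direct consequence of Theorem~\ref{t weak-2-local derivations on B(l2)} and \cite[Lemma~2.3]{NiPe}, and you have faithfully unpacked what that combination buys. Your final appeal to \cite[Theorem~3.4]{BenAliPeraltaRamirez} to upgrade the linear weak-local map to a genuine derivation is a harmless (and valid) safeguard; depending on how much \cite[Lemma~2.3]{NiPe} already packages, it may be redundant, but it does no damage.
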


\end{document}